\tikzset{
  edge node/.code={%
      \expandafter\def\expandafter\tikz@tonodes\expandafter{\tikz@tonodes
      #1}}} \makeatother \tikzset{
      subseteq/.style={ draw=none, edge
      node={node [sloped, allow upside down,
      auto=false]{$\subsetneq$}}},
      Subseteq/.style={ draw=none, every
      to/.append style={ edge node={node
      [sloped, allow upside down,
      auto=false]{$\subsetneq$}}}}}
\numberwithin{equation}{section}
\newtheorem{theorem}{Theorem}[section]
\newtheorem{corollary}[theorem]{Corollary}
\newtheorem{prob}[theorem]{Problem}
\newtheorem{lemma}[theorem]{Lemma}
\theoremstyle{definition}
\theoremstyle{remark}
\newtheorem{rem}[theorem]{Remark}
\newcommand*{\hh}{\mathscr{H}}
\newcommand*{\ascr}{\mathscr{A}}
\newcommand*{\ddd}{\mathscr{D}}
\newcommand*{\natu}{\mathbb{N}}
\newcommand*{\nul}{\mathscr{N}}
\newcommand*{\real}{\mathbb{R}}
\newcommand*{\comp}{\mathbb{C}}
\newcommand*{\borel}{\mathfrak{B}}
\newcommand*{\cbb}{\comp}
\newcommand*{\D}{\mathrm{d\hspace{.1ex}}}
\newcommand*{\jd}[1]{{\mathscr N}(#1)}
\newcommand*{\Le}{\leqslant}
\newcommand*{\lambdab}{\boldsymbol\lambda}
\newcommand*{\Ge}{\geqslant}
\newcommand*{\ran}{\mathscr{R}}
\newcommand*{\ogr}{\boldsymbol{B}}
\newcommand*{\kk}{\mathscr{K}}
\newcommand*{\mm}{\mathscr{M}}
\newcommand*{\rbb}{\mathbb{R}}
\newcommand*{\zbb}{\mathbb{Z}}
\begin{document}

\title[On $n$th roots of quasinormal
operators]{On $n$th roots of bounded and unbounded\\
quasinormal operators}

   \author[P. Pietrzycki and J. Stochel]{Pawe{\l} Pietrzycki and  Jan Stochel}

   \subjclass[2020]{Primary 47B20, 47B15;
Secondary 47A63, 44A60}

   \thanks{This research of both authors was supported by the Priority Research Area
SciMat under the program Excellence
Initiative-Research University at the Jagiellonian
University in Krakow, Poland.}

   \keywords{quasinormal operator, subnormal
operator, class A operator, intertwining
theorem, Stieltjes moment problem}

   \address{Wydzia{\l} Matematyki i Informatyki, Uniwersytet
Jagiello\'{n}ski, ul. {\L}ojasiewicza 6, PL-30348
Krak\'{o}w, Poland.}

   \email{pawel.pietrzycki@im.uj.edu.pl}

   \address{Wydzia{\l} Matematyki i Informatyki, Uniwersytet
Jagiello\'{n}ski, ul. {\L}ojasiewicza 6, PL-30348
Krak\'{o}w, Poland.}

   \email{jan.stochel@im.uj.edu.pl}

   \begin{abstract}
In a recent paper \cite{Curto20}, R. E. Curto,
S. H. Lee and J. Yoon asked the following
question: {\em Let $T$ be a subnormal operator,
and assume that $T^2$ is quasinormal. Does it
follow that $T$ is quasinormal?}. In
\cite{P-S21} we answered this question in the
affirmative. In the present paper, we will
extend this result in two directions. Namely, we
prove that both class A $n$th roots of bounded
quasinormal operators and subnormal $n$th roots
of unbounded quasinormal operators are
quasinormal. We also show that a non-normal
quasinormal operator having a quasinormal $n$th
root has a non-quasinormal $n$th root.
   \end{abstract}
   \maketitle
   \section{\label{Sec.1}Introduction}
The importance of the spectral theorem in
mathematics and its applications was a
motivation for the search for wider classes of
operators inheriting some properties of the
ancestors. Consequently, there have been many
generalizations obtained by weakening the
conditions defining normal operators. Let us
recall some of them that are the subject of our
research in this article.

Denote by $\ogr(\hh)$ the $C^*$-algebra of all
bounded linear operators on a complex Hilbert
space $\hh$ and by $I=I_\hh$ the identity
operator on $\hh$. We write $\ogr_+(\hh)$ for
the convex cone of all positive selfadjoint
elements of $\ogr(\hh)$. Given another complex
Hilbert spaces $\kk$, we denote by
$\boldsymbol{B}(\hh,\kk)$ the Banach space of
all bounded linear operators from $\hh$ to
$\kk$. The kernel and the range of $T\in
\ogr(\hh,\kk)$ are denoted by $\jd{T}$ and
$\ran(T)$, respectively.

An operator $T\in \ogr(\hh)$ is said to be:
   \begin{itemize}
   \item \textit{normal} if $T^*T=TT^*$,
   \item \textit{quasinormal} if $T(T^*T)=(T^*T)T$,
   \item \textit{subnormal} if $T$ is (unitarily equivalent to) the restriction
of a normal operator to its closed invariant subspace,
   \item \textit{hyponormal} if $TT^* \Le T^*T$,
   \item \textit{$p$-hyponormal} if $(TT^*)^p \Le (T^*T)^p$, where $p$ is
a positive real number,
   \item \textit{log-hyponormal} if $T$ is invertible
in $\ogr(\hh)$ and $\log TT^* \Le \log T^*T$,
   \item of \textit{class A} if $T^*T \Le (T^{*2}T^2)^{\frac12}$,
   \item \textit{paranormal} if $\|Th\|^2 \Le
\|T^2h\|\|h\|$ for all $h\in \hh$.
  \end{itemize}
The structure of the inclusion relations between the classes
of operators defined above is illustrated in
Figure~\ref{inkluzje} (for more information, see
Section~\ref{Sec.2}). The classes of subnormal and hyponormal
operators were introduced by P. R. Halmos in \cite{hal50}.
The study of quasinormal operators was initiated by A. Brown
in \cite{brow53}. In turn, the notions of a paranormal
operator and an operator of class A were introduced by V.
Istr\u{a}\c{t}escu in \cite{Ist67} and by T. Furuta, M. Ito
and T. Yamazaki in \cite{FIY98}, respectively. We refer the
reader to Theorem~\ref{yama} for an explanation of why
operators of class A appear naturally.

\begin{figure}
\centering
\begin{tikzpicture}
\matrix (m) [matrix of math nodes,row
sep=2em,column sep=1em,minimum
width=2em]{\textit{normal}& \textit{quasinormal}
& \textit{subnormal} & \textit{hyponormal}
   \\}; \path[-stealth,auto]
(m-1-1) edge[subseteq] node {}
(m-1-2)
(m-1-2) edge[subseteq] node {}
(m-1-3)
(m-1-3) edge[subseteq] node {}
(m-1-4)
(m-1-2) edge[subseteq] node {}
(m-1-3);
\end{tikzpicture}
\begin{tikzpicture}
\matrix (m) [matrix of math nodes,row
sep=0.5em,column sep=1.7em,minimum width=2em]{ &
\textit{$p$-hyponormal}& &
   \\
\textit{invertible p-hyponormal} & &
\textit{class A} & \textit{paranormal}
   \\
  & \textit{log-hyponormal} & &
   \\}; \path[-stealth,auto]
(m-2-1) edge[subseteq] node {}
(m-1-2)
(m-2-1) edge[subseteq] node {}
(m-3-2)
(m-1-2) edge[subseteq] node {}
(m-2-3)
(m-3-2) edge[subseteq] node {}
(m-2-3)
(m-2-3) edge[subseteq] node {}
(m-2-4);
\end{tikzpicture}
   \vspace{-2ex} \caption{Inclusion relations between the
classes of operators under consideration.} \label{inkluzje}
\end{figure}
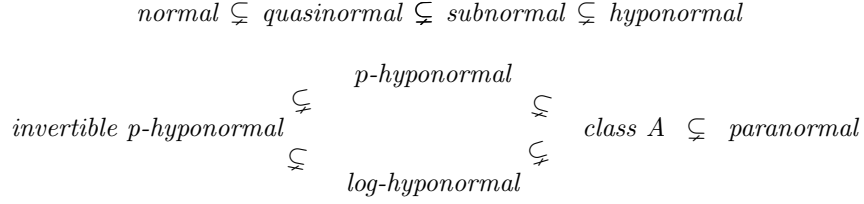

The present work is a continuation of the
article \cite{P-S21}, in which the authors
solved affirmatively the problem posed by R. E.
Curto, S. H. Lee and J. Yoon (see
\cite[Problem~1.1]{Curto20}). In fact, the
following more general result has been proven.
   \begin{theorem}[{\cite[Theorem~1.2]{P-S21}}]\label{twPS}
If $T\in \ogr(\hh)$ is a subnormal operator such
that $T^n$ is quasinormal, where $n$ is a
positive integer, then $T$ is quasinormal.
   \end{theorem}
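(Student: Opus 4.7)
The plan is to lift the problem to a normal extension of $T$ and recast the quasinormality of $T^n$ as an invariance condition for the projection onto $\hh$. Let $N$ denote the minimal normal extension of $T$ acting on some $\kk\supseteq\hh$, let $P$ be the orthogonal projection of $\kk$ onto $\hh$, and observe that $N^n$ is a (generally non-minimal) normal extension of the subnormal operator $T^n$. A direct computation based on $T^k=N^k|_\hh$ and $T^{*k}=PN^{*k}|_\hh$ yields
\[
T^{*k}T^k \;=\; P\,|N|^{2k}\,|_\hh \qquad (k\Ge 1).
\]
On the other hand, the defining relation $S(S^*S)=(S^*S)S$ for a quasinormal operator $S$, together with its adjoint, gives by a straightforward induction $S^{*k}S^k=(S^*S)^k$ for every $k\Ge 1$. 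Applying this with $S=T^n$ and $k=2$, and inserting the representation above, the hypothesis translates into
\[
P\,|N|^{4n}\,P \;=\; \bigl(P\,|N|^{2n}\,P\bigr)^{2},
\]
that is, into $PA^2P=(PAP)^2$ with $A:=|N|^{2n}\Ge 0$.

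The heart of the argument is then the elementary projection identity
\[
PA^2P - (PAP)^2 \;=\; PA(I-P)AP \;=\; \bigl((I-P)AP\bigr)^{*}\bigl((I-P)AP\bigr),
\]
valid for any orthogonal projection $P$ and any selfadjoint $A$. Since the right-hand side is a positive operator, the vanishing of the left-hand side forces $(I-P)AP=0$, i.e.\ $\hh$ is invariant (equivalently, since $A$ is selfadjoint, reducing) for $A$. Taking $A=|N|^{2n}$ we conclude that $\hh$ reduces $|N|^{2n}$; the Borel functional calculus $t\mapsto t^{1/n}$ on $[0,\infty)$ then promotes this to $\hh$ reducing $|N|^{2}$. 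Once $\hh$ reduces $|N|^{2}$, no compression is needed in the formula $T^*T=|N|^2|_\hh$, and combining this with normality of $N$ (so that $N|N|^2=|N|^2N$) and the $N$-invariance of $\hh$ immediately gives $T(T^*T)=(T^*T)T$, as required.

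I anticipate the main conceptual obstacle to be the realization that it suffices to extract from the quasinormality of $T^n$ only the symmetric identity $(T^n)^{*2}(T^n)^{2}=((T^n)^*T^n)^{2}$; once this identity is transported to $\kk$, positivity in the projection identity supplies the reducing property for $\hh$, and the exponent $n$ dissolves painlessly under functional calculus. In particular, the plan I am proposing avoids the Stieltjes moment problem, polar decompositions, and the operator inequality $(T^*T)^k\Le T^{*k}T^k$ that one might a priori expect to play a role.
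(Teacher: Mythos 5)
Your proof is correct, and it takes a genuinely different route from the one the paper (and its predecessor \cite{P-S21}) follows. The paper obtains the bounded case as a special instance of Theorem~\ref{twhyp} on class~A roots, whose proof runs through Ito's monotonicity theorem (Theorem~\ref{yama}) and the intertwining Theorem~\ref{bblem}, itself resting on the Douglas factorization theorem, a two-moment characterization of spectral measures for POV measures (Theorem~\ref{pupra}) and Lemma~\ref{kadlemma}; the other known proof, extended to unbounded operators in Theorem~\ref{twsub}, is moment-theoretic. You instead exploit subnormality head-on: with $N$ a (not necessarily minimal) normal extension and $P$ the projection of $\kk$ onto $\hh$, the compression formula $T^{*k}T^k=P|N|^{2k}|_\hh$ turns the single Embry identity $(T^n)^{*2}(T^n)^2=\big((T^n)^*T^n\big)^2$ into equality in $PA^2P\Ge (PAP)^2$ for $A=|N|^{2n}$, whence $\hh$ reduces $|N|^{2n}$, hence by functional calculus $|N|^2$, and quasinormality of $T$ drops out. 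Your projection identity is precisely the equality case of Lemma~\ref{kadlemma} with $V$ the inclusion of $\hh$ into $\kk$ and the selfadjoint $A$ in place of $T$, so the elementary mechanism is shared, but you bypass the operator-monotone and moment-problem machinery entirely; as a bonus, since you only use the single equation $T^{*2n}T^{2n}=(T^{*n}T^n)^2$, your argument in fact proves the stronger single-equality statement analogous to Theorem~\ref{kupia} (cf.\ \cite[Theorem~4.1]{P-S21}). What the heavier approach of the paper buys is exactly what your argument cannot reach: it requires no normal extension, so it covers class~A (in particular hyponormal, $p$-hyponormal, log-hyponormal) $n$th roots, and its moment-theoretic variant survives the passage to closed unbounded operators, where your steps (e.g., $(T^n)^{*2}(T^n)^2=T^{*2n}T^{2n}$ and dense definedness of the relevant products) would require the delicate domain analysis carried out in Section~\ref{Sec.5}.
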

In \cite[Section~3]{P-S21} we gave two proofs of
this theorem using quite different techniques.
The first technique appeals to the theory of
operator monotone functions with emphases on
Hansen's inequality. The second relies on the
theory of (scalar and operator) moment problems.
The origins of the second technique go back to
the celebrated Embry's characterization of
subnormal operators expressed in terms of the
Stieltjes operator moment problem (see
\cite{Embry73}; see also \cite{Lam76,Ag85}).

Problem~1.1 in \cite{Curto20} can be naturally
generalized in two directions: by enlarging the
class of $n$th roots, and by allowing the
operators in question to be closed and
unbounded.
   \begin{prob}[{see \cite[Problem 5.1]{P-S21}}] \label{prob2}
Let $T$ be a subnormal $($hyponormal, etc.$)$
operator which is bounded, or unbounded and
closed. Assume that for some integer $n \Ge 2$,
$T^n$ is quasinormal. Does it follow that $T$ is
quasinormal?
   \end{prob}
It turns out that the first technique of proving
Theorem~\ref{twPS} which appeals to operator
monotone functions is more suitable for bounded
operators. Namely, we will prove the following
theorem, which is the first of the three main
results of this paper.
   \begin{theorem}\label{twhyp}
Let $T\in \ogr(\hh)$ be of class A $($in
particular, $p$-hyponormal or
log-hyponormal\/$)$ and $n$ be an integer
greater than $1$ such that $T^n$ is quasinormal.
Then $T$ is quasinormal.
   \end{theorem}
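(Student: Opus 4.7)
The plan is to adapt the first proof technique from \cite{P-S21}, which hinges on operator monotone functions and Hansen's inequality, to the weaker assumption that $T$ is of class A. The defining inequality $T^*T \Le (T^{*2}T^2)^{1/2}$ of class A can be iterated, using Hansen's inequality together with L\"owner--Heinz monotonicity, to produce a nondecreasing chain
\[
T^*T \Le (T^{*2}T^2)^{1/2} \Le (T^{*3}T^3)^{1/3} \Le \cdots \Le (T^{*n}T^n)^{1/n}.
\]
This is essentially a Yamazaki-type monotonicity for class A operators (compare Theorem~\ref{yama}), and is the only place where the class A hypothesis enters.

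Next, I would translate the quasinormality of $T^n$ into usable operator identities. Quasinormality of $T^n$ is equivalent to $T^n$ commuting with $T^{*n}T^n$, and hence, via the functional calculus applied to the injective map $x \mapsto x^n$ on $[0,\infty)$, to $T^n$ commuting with $(T^{*n}T^n)^{1/n}$. Moreover, quasinormality of $T^n$ yields the classical moment-type identities $(T^{*n}T^n)^k = T^{*nk}T^{nk}$ for every $k \in \natu$, supplying a rich family of positive operators commuting with~$T^n$ and fitting naturally with the inequality chain above.

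The decisive step, which I expect to be the main obstacle, is to argue that the inequality chain actually collapses into equalities; equivalently, that $(T^*T)^n = T^{*n}T^n$. To accomplish this I would invoke the equality case of Hansen's inequality, which characterizes commutation between the operators involved, and combine it with the commutation properties of $T^n$ from the previous step via a telescoping argument running along the chain. Once the identity $(T^*T)^n = T^{*n}T^n$ is in hand, writing $T = U|T|$ in polar form and expanding both sides forces $U$ and $|T|$ to commute on $\overline{\ran(|T|)}$, which by Brown's characterization of quasinormality is precisely the assertion that $T$ is quasinormal.
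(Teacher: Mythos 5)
Your proposal identifies the right starting points (Theorem~\ref{yama} for the monotone chain, Embry's characterization for $T^n$), but it leaves the genuine difficulty unproved, and its closing step is false as stated. The ``decisive step'' --- collapsing the chain, equivalently proving $(T^*T)^n=T^{*n}T^n$ --- is exactly the heart of the matter, and ``equality case of Hansen's inequality plus a telescoping argument'' is a hope, not an argument. For a class A operator there is no normal extension or compression structure that exhibits the operators $(T^{*k}T^k)^{1/k}$ as Hansen-type compressions (that structure is what made Hansen's inequality usable in the subnormal case of \cite{P-S21}); the monotonicity in Theorem~\ref{yama} comes from Furuta-type inequalities, and whether even a single adjacent equality $(T^{*n}T^n)^{\frac1n}=(T^{*(n+1)}T^{n+1})^{\frac1{n+1}}$ forces flatness for class A operators is posed in this paper as an open problem (Problem~\ref{as}), which strongly suggests that a naive telescoping down the chain is not available. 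The paper's proof does not collapse the chain at all: from quasinormality of $T^n$ and Theorem~\ref{embry}(iii) it extracts equalities at levels $n,n+1,n+2$, i.e.\ $T^*D^sT=D^{s+1}$ for $s=n,n+1$ with $D=(T^{*n}T^n)^{\frac1n}$ and $T^*T\Le D$, and then invokes the two-exponent intertwining result Theorem~\ref{achtwdw} (a consequence of Theorem~\ref{bblem}, proved in Section~\ref{Sec.3} via a two-moment characterization of spectral measures and a Kadison-type equality lemma). That machinery, for which your sketch offers no substitute, is where the work lies; quasinormality of $T$ and the equality $D=|T|^2$ come out simultaneously, rather than the collapse being an intermediate step.

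The final step is also incorrect: the single identity $T^{*n}T^n=(T^*T)^n$ does not imply quasinormality --- by \eqref{qqq} there exist operators satisfying exactly this one equality and no other --- so expanding both sides in the polar form $T=U|T|$ cannot ``force'' $U$ and $|T|$ to commute. If you did have the collapse, the correct way to finish is not via polar decomposition but via Embry: quasinormality of $T^n$ gives $(T^{*nl}T^{nl})^{\frac1{nl}}=(T^{*n}T^n)^{\frac1n}=T^*T$ for all $l\in\natu$, and then the monotonicity of Theorem~\ref{yama} squeezes $(T^{*k}T^k)^{\frac1k}=T^*T$ for every $k$, which is Theorem~\ref{embry}(iii). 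So the last paragraph is repairable, but the middle step remains a genuine gap, and it is precisely the gap that Theorems~\ref{bblem} and \ref{achtwdw} were designed to close.
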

Theorem~\ref{twhyp} follows from the second
statement of Theorem~\ref{rowm} in
Section~\ref{Sec.4}. The first statement of
Theorem~\ref{rowm} relates Embry's description
of quasinormal operators (see
Theorem~\ref{embry}) to a certain chain of
inequalities characterizing operators of class A
(see Theorem~\ref{yama}).

Results similar to those in Theorems~\ref{twPS} and
\ref{twhyp} for $n$th roots of normal operators have been
known for a long time. Namely, J. G. Stampfli proved that a
hyponormal $n$th root of a normal operator is normal (see
\cite[Theorem~5]{sta62}). T.~Ando improved this result
showing that a paranormal $n$th root of a normal operator is
normal (see \cite[Theorem~6]{Ando72}). However, a hyponormal
$n$th root of a subnormal operator need not be subnormal (see
\cite[pp.\ 378/379]{sta66}). It turns out that normal
operators and non-normal quasinormal operators can have
non-normal and non-quasinormal $n$th roots, respectively. A
more detailed discussion on this topic can be found in
Section~\ref{Sec.6}. Other questions concerning square roots
(or more generally $n$th roots) in selected classes of
operators have been studied at least since the early 1950's
(see e.g.,\
\cite{hal53,hal54,put57,Emb68,Gil74,Keo84,wog85,con87,Dug93,Ki-Ko22,MPR}).

To prove Theorem~\ref{twhyp}, we will need the
following theorem, which is the second of the
three main results of this paper. It generalizes
\cite[Lemma~3.7]{pp18} in two directions. First,
it removes the injectivity assumption, and
second, it replaces commutativity by a more
general intertwining relation. We give the proof
of Theorem~\ref{bblem} in Section~\ref{Sec.3}.
This theorem is no longer true if the operators
$A$ and $B$ do not satisfy the condition
$A^*A\Le B$, even if $\kk=\hh$ and $C=B$ (see
\cite[Example~3.10]{pp18}).
   \begin{theorem} \label{bblem}
Let $\hh$ and $\kk$ be complex Hilbert spaces,
$A\in \ogr(\hh,\kk)$, $B\in \ogr_+(\hh)$ and
$C\in \ogr_+(\kk)$. Suppose that $\alpha, \beta$
are distinct positive real numbers. Then the
following conditions are equivalent{\em :}
   \begin{itemize}
   \item[(i)]
$A^*A \Le B$ and $A^*C^sA =B^{s+1}$ for
$s=\alpha,\beta$,
   \item[(ii)] $A^*A= B$ and $AB=CA$.
   \end{itemize}
   \end{theorem}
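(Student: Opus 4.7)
\emph{Direction (ii)$\Rightarrow$(i).} If $A^*A=B$ and $AB=CA$, then by induction $AB^n=C^nA$ for every nonnegative integer $n$. Uniform polynomial approximation on the compact spectra of $B$ and $C$ extends this to $Af(B)=f(C)A$ for every continuous $f\colon[0,\infty)\to\mathbb{R}$, and in particular $AB^s=C^sA$ for each $s>0$. Hence $A^*C^sA=A^*A\cdot B^s=B\cdot B^s=B^{s+1}$, and $A^*A\Le B$ is automatic.

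\emph{Direction (i)$\Rightarrow$(ii): setup.} Douglas' range-factorization theorem applied to $A^*A\Le B=B^{1/2}\cdot B^{1/2}$ produces a contraction $V\in\ogr(\hh,\kk)$ with $A=VB^{1/2}$, normalized so that $V=VP$, where $P$ is the orthogonal projection onto $\overline{\ran B}=\overline{\ran B^{1/2}}$. Substituting into $A^*C^sA=B^{s+1}$ gives $B^{1/2}(V^*C^sV-B^s)B^{1/2}=0$, and since $V=VP$ forces $V^*C^sV$ to vanish on $(I-P)\hh$ (as does $B^s$), a $2\times 2$ block decomposition along $P\hh\oplus(I-P)\hh$ promotes this to the reduced identities
\begin{equation*}
V^*C^sV=B^s,\qquad s\in\{\alpha,\beta\}.
\end{equation*}
Once $V^*V=P$ is proved, $A^*A=B^{1/2}V^*VB^{1/2}=B^{1/2}PB^{1/2}=B$ is immediate, and $AB=CA$ reduces to the intertwining $VB=CV$.

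\emph{The core step and the conclusion.} To extract $V^*V=P$ from the reduced identities together with $V^*V\Le P$, my plan is to combine Hansen's inequality and the Hansen--Pedersen inequality. For a contraction $V$ and the operator monotone power $t^\gamma$, $\gamma\in(0,1]$, Hansen gives $V^*C^\gamma V\Le(V^*CV)^\gamma$, so the given identity forces $B^\gamma\Le(V^*CV)^\gamma$; for $\gamma\in[1,2]$, where $t^\gamma$ is operator convex with $f(0)=0$, Hansen--Pedersen gives $(V^*CV)^\gamma\Le V^*C^\gamma V=B^\gamma$. Invoking these at $\alpha$ and $\beta$ and using Löwner--Heinz monotonicity ($X\Le Y\Rightarrow X^p\Le Y^p$ for $p\in(0,1]$) together with the uniqueness of positive fractional roots, I expect to pin down $V^*CV=B$; the equality case of Hansen's inequality then identifies $V^*V$ with $P$ on $\overline{\ran B}$. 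Finally, a direct expansion of $\|VBh-CVh\|^2$, using $V^*V=P$ together with the identities $V^*C^sV=B^s$ at $s=1,2$ (obtained by iterating the established equalities once the isometry is in hand), collapses to zero and yields $VB=CV$, hence $AB=CA$.

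\emph{Main obstacle.} The delicate part is the core step. A scalar moment-problem uniqueness argument is unavailable: even for compactly supported measures, two moments plus a zeroth-moment inequality do not determine the measure, so the proof must exploit the operator structure essentially. Because $\alpha,\beta$ range over all of $(0,\infty)$, several parameter regimes for $t^\gamma$ (operator monotone on $(0,1]$, operator convex on $[1,2]$, and neither beyond) have to be handled uniformly--likely by first raising to a common fractional power in an operator-monotone range before invoking uniqueness of positive roots. A further technical point is the bookkeeping when $B$ is non-injective: the normalization $V=VP$ is the device that lets all the relevant identities descend to the invariant subspace $P\hh$ while remaining trivial on its complement.
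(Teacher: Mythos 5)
Your direction (ii)$\Rightarrow$(i) and your setup for (i)$\Rightarrow$(ii) — Douglas factorization $A=VB^{1/2}$, passing to $\hh_0=\overline{\ran(B)}$, and the reduced identities $V^*C^sV=(B|_{\hh_0})^s$ for $s=\alpha,\beta$ with $V|_{\hh_0}$ a contraction — coincide with the paper's. But the ``core step'' is precisely where the whole difficulty of the theorem sits, and what you offer there is a plan, not a proof, with concrete gaps. (a) The Hansen/Hansen--Pedersen sandwich needs one exponent in $(0,1]$ and one in $[1,2]$; for general $\alpha,\beta$ you do not have this, and even after the natural rescaling ($C\mapsto C^{\beta}$, $B\mapsto B^{\beta}$, reducing to exponents $1$ and $\delta=\alpha/\beta\in(0,1)$) what remains is exactly an \emph{equality case} of Hansen's inequality at a single power for a mere contraction $V$ with $V^*CV$ injective. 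You neither state nor prove such an equality criterion, and it is not an off-the-shelf result in this generality (the standard equality analyses assume an isometry or give weaker conclusions); in effect, that equality statement is equivalent to the theorem you are trying to prove, so invoking it is circular as written. (b) Your closing expansion of $\|VBh-CVh\|^2$ requires $V^*C^2V=B^2$, which is not among the hypotheses; ``iterating the established equalities'' to get it would need $CV=VV^*CV$, i.e.\ the equality case of the Kadison-type inequality — again the very thing to be established.

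The paper closes this gap by a genuinely different device: it forms the POV measure $F(\varDelta)=Q_0^*E(\varDelta)Q_0$ from the spectral measure $E$ of $C$, so that the two identities read $(B|_{\hh_0})^s=\int_{\rbb_+}x^s\,F(\D x)$ for $s=\alpha,\beta$ with $F(\rbb_+)\Le I$, and then applies a two-moment characterization of spectral measures (Theorem~\ref{pupra}, resting on \cite[Theorem~4.2]{P-S21} and \cite{P-S22}) together with injectivity of $B|_{\hh_0}$ to conclude that $F$ \emph{is} the spectral measure of $B|_{\hh_0}$. This yields $Q_0^*Q_0=I_{\hh_0}$ at once, and since each $F(\varDelta)$ is a projection, the equality case of the Kadison inequality (Lemma~\ref{kadlemma}) gives $E(\varDelta)Q_0=Q_0F(\varDelta)$, hence $CQ_0=Q_0B|_{\hh_0}$ and $AB=CA$. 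Note also that your remark that ``a scalar moment-problem uniqueness argument is unavailable'' is only half right: scalar uniqueness indeed fails, but the operator-valued two-moment theorem is exactly how the paper exploits the operator structure; if you wish to avoid importing it, you must supply a proof of the single-power Hansen equality case for contractions, which your proposal leaves open.
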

The above result covers the case of $n$-tuples
of noncommuting operators. As shown below,
Theorem~\ref{bblem} implies Theorem~\ref{sewer}.
Since the converse implication is obvious, both
theorems are logically equivalent.
   \begin{theorem} \label{sewer}
Fix a positive integer $n$. Let $\hh, \kk_i$ be
complex Hilbert spaces, $A_i \in \ogr(\hh,
\kk_i)$, $B\in \ogr_+(\hh)$ and $C_i \in
\ogr_+(\kk_i)$, where $i=1, \ldots, n$. Suppose
that $\alpha, \beta$ are distinct positive real
numbers. Then the following conditions are
equivalent{\em :}
   \begin{enumerate}
   \item[(i)]
$A_1^*A_1 + \cdots + A_n^*A_n \Le B$ and
$A_1^*C_1^sA_1 + \cdots + A_n^*C_n^sA_n =
B^{s+1}$ for $s=\alpha,\beta$,
   \item[(ii)]
$A_1^*A_1 + \cdots + A_n^*A_n = B$ and $A_i B=
C_iA_i$ for $i=1, \ldots,n$.
   \end{enumerate}
   \end{theorem}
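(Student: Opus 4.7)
The plan is to derive Theorem~\ref{sewer} from Theorem~\ref{bblem} by bundling the $n$-tuple of operators into a single pair acting between $\hh$ and an orthogonal direct sum. Form $\kk := \kk_1 \oplus \cdots \oplus \kk_n$, and introduce the column operator $A \in \ogr(\hh, \kk)$ given by $Ah = (A_1 h, \ldots, A_n h)$ together with the block-diagonal operator $C := C_1 \oplus \cdots \oplus C_n \in \ogr_+(\kk)$. Boundedness of $A$ (with $\|A\|^2 \Le \sum_{i=1}^n \|A_i\|^2$) and positivity of $C$ follow at once from the analogous properties of the $A_i$ and $C_i$.

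The reduction rests on a few elementary identities. First, $A^*A = \sum_{i=1}^n A_i^*A_i$ by direct computation. Second, because continuous functional calculus distributes over orthogonal direct sums, $C^s = C_1^s \oplus \cdots \oplus C_n^s$ for every $s > 0$, which yields $A^*C^s A = \sum_{i=1}^n A_i^*C_i^s A_i$. Third, applying both sides to a vector $h \in \hh$ and invoking the block-diagonal form of $C$, one finds $ABh - CAh = (A_1 Bh - C_1 A_1 h, \ldots, A_n Bh - C_n A_n h)$, so $AB = CA$ if and only if $A_i B = C_i A_i$ for every $i$.

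With these identities in hand, condition (i) of Theorem~\ref{sewer} is exactly condition (i) of Theorem~\ref{bblem} for the triple $(A, B, C)$, and likewise for condition (ii); so the equivalence follows in one line from Theorem~\ref{bblem}. I do not expect any serious obstacle in the argument: the sole point meriting a moment's care is the identity $C^s = \bigoplus_{i=1}^n C_i^s$, but this is immediate from the spectral theorem applied to each $C_i$. Everything else is pure bookkeeping, which is precisely why the paper can afford to present the $n$-tuple version as a formal corollary of the single-operator version.
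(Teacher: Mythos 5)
Your proposal is correct and coincides with the paper's own argument: the paper likewise deduces Theorem~\ref{sewer} from Theorem~\ref{bblem} by applying it to the quadruple $(\kk, A, B, C)$ with $\kk = \kk_1 \oplus \cdots \oplus \kk_n$, $C = C_1 \oplus \cdots \oplus C_n$ and $Ah = A_1h \oplus \cdots \oplus A_nh$. The bookkeeping identities you spell out (including $C^s = C_1^s \oplus \cdots \oplus C_n^s$ and the componentwise reading of $AB = CA$) are exactly what the paper leaves implicit.
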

   \begin{proof}
Apply Theorem~\ref{bblem} to the quadruple
$(\kk,A,B,C)$ defined by $\kk:=\kk_1 \oplus
\cdots \oplus \kk_n$, $C:=C_1 \oplus \cdots
\oplus C_n$ and $Ah:= A_1h \oplus \cdots \oplus
A_n h$ for $h \in \hh$.
   \end{proof}
It is worth pointing out that
Theorem~\ref{bblem} allows us to obtain a useful
criterion for the quasinormality of arbitrary
operators (without assuming injectivity).
   \begin{theorem} \label{achtwdw}
Let $A\in \ogr(\hh)$, $B\in \ogr_+(\hh)$ and $\alpha, \beta$
be distinct positive real numbers. Then the following
conditions are equivalent{\em :}
   \begin{itemize}
   \item[(i)]
$A^*A \Le B$ and $A^*B^sA =B^{s+1}$ for
$s=\alpha,\beta$,
   \item[(ii)] $A$ is quasinormal and $B=|A|^2$.
   \end{itemize}
   \end{theorem}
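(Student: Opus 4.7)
The plan is to deduce Theorem~\ref{achtwdw} from Theorem~\ref{bblem} by specializing to the case $\kk=\hh$ and $C=B$. In this specialization, condition (i) of Theorem~\ref{bblem} reads exactly as condition (i) of Theorem~\ref{achtwdw}, while condition (ii) of Theorem~\ref{bblem} becomes $A^*A=B$ together with the commutation $AB=BA$. So the whole job reduces to checking that this last pair of conditions is equivalent to ``$A$ is quasinormal and $B=|A|^2$''.

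For the equivalence of the reformulated condition (ii) with the stated condition (ii) of Theorem~\ref{achtwdw}, first observe that $A^*A=B$ says precisely $B=|A|^2$. Given this identification, the commutation $AB=BA$ becomes $A(A^*A)=(A^*A)A$, which is the definition of quasinormality. Conversely, if $A$ is quasinormal and $B=|A|^2$, then $A^*A=B$ and $AB=A(A^*A)=(A^*A)A=BA$. This is entirely routine and requires no subtle functional-calculus argument at this stage.

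Thus the proof will consist of one short paragraph: first invoke Theorem~\ref{bblem} applied to the data $(\hh,\hh,A,B,B)$ to translate (i) into ``$A^*A=B$ and $AB=BA$'', then rewrite this as ``$B=|A|^2$ and $A$ quasinormal''. For the converse direction, note that (ii) trivially forces $A^*A=B\Le B$, and since $A$ commutes with $B=|A|^2$ it commutes with every Borel function of $B$; in particular $AB^s=B^sA$ for $s=\alpha,\beta$, which yields $A^*B^sA=A^*AB^s=B\cdot B^s=B^{s+1}$, recovering (i). There is no genuine obstacle here: the only potentially non-trivial input, namely that commutation with $B$ upgrades to commutation with the fractional powers $B^{\alpha}$ and $B^{\beta}$, is a standard consequence of the continuous functional calculus for the positive bounded operator $B$.
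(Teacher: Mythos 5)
Your proposal is correct and follows the same route as the paper: both deduce the result by applying Theorem~\ref{bblem} with $\kk=\hh$ and $C=B$ and then observing that condition (ii) of that theorem, namely $A^*A=B$ and $AB=BA$, is just a restatement of ``$B=|A|^2$ and $A$ quasinormal''. Your extra paragraph verifying (ii)$\Rightarrow$(i) via the functional calculus is harmless but redundant, since the equivalence in Theorem~\ref{bblem} already covers both directions.
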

   \begin{proof}
It follows from Theorem~\ref{bblem} with $\kk=\hh$ and
$C=B$ that the condition (i) is equivalent to the
conjunction of two equalities $B=|A|^2$ and
$A(A^*A)=(A^*A)A$.
   \end{proof}
As shown in Section~\ref{Sec.5}, the second
technique used in the proof of
Theorem~\ref{twPS} which is based on the theory
of moments is better suited to unbounded (i.e.,
not necessarily bounded) subnormal operators.
First, we need to define the unbounded
counterparts of the concepts of quasinormality
and subnormality. Given a linear operator $T$ in
$\hh$, we denote by $\ddd(T)$, $\jd{T}$,
$\ran(T)$ and $T^*$ the domain, the kernel, the
range and the adjoint of $T$, respectively.
Following \cite{Kauf83} (cf.\
\cite{Sto-Szaf89}), we say that a closed densely
defined operator $T$ in $\hh$ is {\em
quasinormal} if $T(T^*T)=(T^*T)T$, or
equivalently (see \cite[Theorem~3.1]{jabl14}) if
and only if $E(\varDelta) T \subseteq
TE(\varDelta)$ for all Borel subsets $\varDelta$
of the nonnegative part of the real line, where
$E$ is the spectral measure of $|T|$. A densely
defined operator $T$ in $\hh$ is said to be {\em
subnormal} if there exists a complex Hilbert
space $\kk$ and a normal operator $N$ in $\kk$
such that $\hh \subseteq \kk$ (isometric
embedding), $\ddd(T)\subseteq \ddd(N)$ and $Th =
Nh$ for all $h \in \ddd(T)$. Such $N$ is called
a {\em normal extension} of $T$. The foundations
of the theory of unbounded subnormal operators
were developed in
\cite{Sto-Szaf85,Sto-Szaf89,Sto-Szaf89III,Sto-Szaf98}.

We are now ready to state the last of the three
main results of this paper. Its proof is given
in Section~\ref{Sec.5}.
   \begin{theorem}\label{twsub}
Let $T$ be a closed densely defined operator in
$\hh$ and $n$ be an integer greater than $1$.
Suppose that $T$ is subnormal and $T^n$ is
quasinormal. Then $T$ is quasinormal.
   \end{theorem}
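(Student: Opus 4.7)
The plan is to transfer the moment-problem strategy used in the second proof of Theorem~\ref{twPS} in \cite{P-S21} to the closed unbounded setting, relying on the theory of unbounded subnormal and quasinormal operators from \cite{Sto-Szaf85,Sto-Szaf89,Sto-Szaf89III,jabl14}. First, I would fix a normal extension $N$ of $T$ in a Hilbert space $\kk\supseteq\hh$, with joint spectral measure $E\colon\borel(\comp)\to\ogr(\kk)$. From the unbounded subnormal theory, the subspace $\ddd^\infty(T):=\bigcap_{k\Ge 0}\ddd(T^k)$ is dense in $\hh$ and $T^k h=N^k h$ there. Thus, for every $h\in\ddd^\infty(T)$,
\begin{equation*}
\|T^k h\|^2 = \int_\comp |z|^{2k}\,\D\|E(z)h\|^2 \quad (k\Ge 0),
\end{equation*}
so the Stieltjes moment sequence $(\|T^{nk}h\|^2)_{k\Ge 0}$ is represented by the pushforward of $\mu_h:=\|E(\cdot)h\|^2$ under the map $\varphi_n\colon z\mapsto |z|^{2n}$.

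Second, the hypothesis that $T^n$ is quasinormal provides a second representation of the same moment sequence. Using the characterization $E_{|T^n|}(\varDelta)T^n\subseteq T^n E_{|T^n|}(\varDelta)$ recalled from \cite[Theorem~3.1]{jabl14}, one deduces that $(T^n)^{*k}(T^n)^k h=|T^n|^{2k}h$ for $h\in\ddd(T^{nk})$. Writing $F$ for the spectral measure of $|T^n|$, this gives
\begin{equation*}
\|T^{nk}h\|^2 = \int_0^{\infty} t^{2k}\,\D\|F(t)h\|^2 \quad (h\in\ddd^\infty(T),\ k\Ge 0).
\end{equation*}

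Third, I would match the two representing measures through a Stieltjes determinacy argument valid on a dense subspace (e.g.\ on vectors of appropriate growth for $N$, where Carleman-type criteria apply) and then convert the resulting measure identity into the commutation relation defining quasinormality. Concretely, once the two measures agree for every $h$ in such a dense subspace, polarization produces an identity between the operator-valued measure $E\circ\varphi_n^{-1}$ and an extension of $F$, which forces the spectral projections of $|T|$ to commute with $T$ on a core, hence on $\ddd(T)$ by the closedness of $T$. This is exactly the defining property of quasinormality for $T$.

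The main obstacle I foresee is this third step: securing Stieltjes determinacy in the unbounded setting and passing from the scalar measure identity to the operator commutation $E_{|T|}(\varDelta)T\subseteq TE_{|T|}(\varDelta)$. In the bounded case one has compact supports, automatic determinacy, and routine polarization; in the unbounded case both the moment growth and the domain gymnastics must be controlled, most likely by restricting to a dense family of well-behaved vectors and then exploiting the closedness of $T$. The unbounded analogues of Embry's and Lambert's characterizations developed in \cite{Sto-Szaf89III}, together with an unbounded version of Theorem~\ref{achtwdw}, should supply the remaining machinery.
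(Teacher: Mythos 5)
Your overall strategy is the same as the paper's (transfer the moment-theoretic proof of Theorem~\ref{twPS} to the unbounded setting and finish with Embry's characterization, Theorem~\ref{embry}), but the two points you defer as ``obstacles'' are exactly where the proof lives, and your sketch of them does not go through as stated. First, the claim that $\ddd^\infty(T)$ is dense ``from the unbounded subnormal theory'' is not a general fact: powers of a closed subnormal operator need not be densely defined (by Naimark's dilation theorem closed symmetric operators are subnormal, and there exist closed symmetric operators whose square has domain $\{0\}$). In the paper the density is a \emph{consequence} of the quasinormality of $T^n$: one takes the spectral measure $E_n$ furnished by Theorem~\ref{embry}(iv) for $T^n$, sets $B=\int_{\rbb_+}\sqrt{x}\,E_n(\D x)$, and works on $\mathscr{E}=\bigcup_{j}\ran\big(E_n([0,j^2])\big)$, which is dense in $\hh$, contained in $\ddd^\infty(B)\subseteq\ddd^\infty(T)$ via $(T^n)^{*k}(T^n)^k=B^{2k}$, and -- crucially -- consists of vectors whose moment sequences with respect to $E_n$ have compactly supported, hence determinate, representing measures. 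Your proposed substitute (``vectors of appropriate growth for $N$'', Carleman-type criteria) lives in $\kk$, not in $\hh$; compressing such vectors to $\hh$ destroys the growth control, so determinacy is not secured where you need it, and this is the crux, not a technicality.

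Second, your endgame is not the paper's and is not carried out. After matching the scalar measures, the paper shows that the semispectral measure $F$ (the compression of the spectral measure of $N$ pushed forward by $z\mapsto|z|^2$) is in fact a \emph{spectral} measure (via $E_n=\widetilde F$ and surjectivity of $\varDelta\mapsto\psi_n^{-1}(\varDelta)$), and then proves the operator identities $T^{*k}T^k=\int_{\rbb_+}x^k F(\D x)$ for all $k$ by genuine domain work: closedness of $T^k$ \cite{Sto02}, Szafraniec's lemma $PN^{*k}\subseteq T^{*k}P$ (Lemma~\ref{pomoc-bis}, with Lemma~\ref{nkngwk}) to get $\ddd(J_k)\subseteq\ddd(T^{*k}T^k)$, and maximality of selfadjoint operators to upgrade $J_k\subseteq T^{*k}T^k$ to equality; Theorem~\ref{embry}(iv) then yields quasinormality. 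Your alternative -- polarization forcing $E_{|T|}(\varDelta)T\subseteq TE_{|T|}(\varDelta)$ ``on a core, hence on $\ddd(T)$ by closedness'' -- is not justified: extending such a commutation relation from a core to all of $\ddd(T)$ requires controlling graph limits under $E_{|T|}(\varDelta)$, which is essentially the domain analysis you have postponed; moreover no unbounded version of Theorem~\ref{achtwdw} is available or needed in this route. As it stands the proposal is a plausible program, but both of its key steps are missing.
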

   \section{\label{Sec.2}Preliminaries}
In this paper, we use the following notation.
The fields of real and complex numbers are
denoted by $\mathbb{R}$ and $\mathbb{C}$,
respectively. The symbols $\zbb_+$, $\natu$ and
$\rbb_+$ stand for the sets of nonnegative
integers, positive integers and nonnegative real
numbers, respectively. Given a set $\varDelta
\subseteq \cbb$, we write $\varDelta^*=\{\bar z
\colon z \in \varDelta\}$. Denote by
$\borel(\varOmega)$ the $\sigma$-algebra of all
Borel subsets of a topological Hausdorff space
$\varOmega$.

A sequence $\{\gamma_n\}_{n=0}^{\infty}$ of real
numbers is said to be a {\em Stieltjes moment
sequence} if there exists a positive Borel
measure $\mu$ on $\rbb_+$ such that
   \begin{align} \label{hamb}
\gamma_n = \int_{\rbb_+} t^n d \mu(t), \quad
n\in \zbb_+.
   \end{align}
A positive Borel measure $\mu$ on $\rbb_+$ satisfying
\eqref{hamb} is called a {\em representing measure} of
$\{\gamma_n\}_{n=0}^{\infty}$. If
$\{\gamma_n\}_{n=0}^{\infty}$ is a Stieltjes moment sequence
which has a unique representing measure, then we say that
$\{\gamma_n\}_{n=0}^{\infty}$ is {\em determinate}. It is
well known that if a Stieltjes moment sequence has a
representing measure with compact support, then it is
determinate. The reader is referred to \cite{B-C-R} for the
foundations of the theory of moment problems.

Let $\ascr$ be a $\sigma$-algebra of subsets of
a set $\varOmega$ and let $F\colon \ascr \to
\ogr(\hh)$ be a {\em positive operator valued
measure} (a {\em POV measure} for brevity), that
is $\langle F (\cdot)f, f\rangle$ is a positive
measure for every $f \in \hh$. Denote by
$L^1(F)$ the vector space of all
$\ascr$-measurable functions $f\colon \varOmega
\to \cbb$ such that $\int_{\varOmega} |f(x)|
\langle F(\D x)h, h\rangle < \infty$ for all
$h\in \hh$. Then for every $f\in L^1(F)$, there
exists a unique operator $\int_\varOmega f \D F
\in \ogr(\hh)$ such that (see e.g.,\
\cite[Appendix]{Sto92})
   \begin{align*}
\Big\langle\int_\varOmega f \D F h, h\Big\rangle
= \int_\varOmega f(x) \langle F(\D x)h,
h\rangle, \quad h\in\hh.
   \end{align*}
If a POV measure $F$ is normalized, that is $F (\varOmega) =
I$, then $F$ is called a {\em semispectral measure}. Observe
that if $F$ is a {\em spectral measure}, that is $F$ is a
semispectral measure such that $F(\varDelta)$ is an
orthogonal projection for every $\varDelta \in \ascr$, then
$\int_\varOmega f \D F$ coincides with the usual spectral
integral. If $F$ is the spectral measure of a normal operator
$T$, then we write $f(T)=\int_{\cbb} f \D F$ for a Borel
function $f\colon \cbb \to \cbb$; the map $f \mapsto f(T)$ is
called the Stone-von Neumann functional calculus. We refer
the reader to \cite{Rud73,Weid80,Bir-Sol87,Sch12} for the
necessary information on spectral integrals, including the
spectral theorem for normal operators and the Stone-von
Neumann functional calculus, which we will need in this
paper.

In the proofs of Theorems~\ref{twhyp} and \ref{twsub}, we use
the following characterizations of quasinormal operators (the
``moreover'' part of Theorem~\ref{embry} follows from the
observation that by \eqref{kul-3}, $E$ is the spectral
measure of $T^*T$)
   \begin{theorem}[\cite{Embry73,jabl14}] \label{embry}
Let $T$ be a closed densely defined operator in
$\hh$. Then the following conditions are
equivalent{\em :}
   \begin{itemize}
   \item[(i)] $T$ is quasinormal,
   \end{itemize}
   \begin{itemize}
   \item[(ii)] $T^{*k}T^{k}=(T^*T)^k$ for $k\in \zbb_+$,
   \item[(iii)]
$(T^{*k}T^{k})^{\frac1k}= T^*T$ for $k\in
\natu$,
   \item[(iv)] there exists a spectral measure $E\colon
\borel(\rbb_+) \to \ogr(\hh)$ such that
   \begin{equation} \label{kul-3}
T^{*k}T^k = \int_{\mathbb{R}_{+}} x^k E(\D x),
\quad k \in \zbb_+.
   \end{equation}
   \end{itemize}
Moreover, the spectral measure $E$ in {\em (iv)} is unique
and if $T\in \ogr(\hh)$, then
   \begin{align*}
E((\|T\|^2, \infty))=0.
   \end{align*}
   \end{theorem}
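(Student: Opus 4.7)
\emph{Plan.} The characterization is classical, due to Embry in the bounded case \cite{Embry73} and extended by Jab{\l}o{\'n}ski to closed densely defined operators \cite{jabl14}. My strategy is to establish the cycle $(\mathrm{i})\Rightarrow(\mathrm{ii})\Rightarrow(\mathrm{iv})\Rightarrow(\mathrm{i})$ together with the side equivalence $(\mathrm{ii})\Leftrightarrow(\mathrm{iii})$. The toolkit consists of the Stone--von Neumann functional calculus for (possibly unbounded) selfadjoint operators, the spectral theorem applied to $T^*T$, and the reformulation of quasinormality already quoted in the preamble to the theorem: by \cite[Theorem~3.1]{jabl14}, $T$ is quasinormal if and only if $E(\varDelta)T\subseteq TE(\varDelta)$ for every $\varDelta\in\borel(\rbb_+)$, where $E$ is the spectral measure of $|T|$.

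\emph{The easy equivalences, $(\mathrm{i})\Rightarrow(\mathrm{ii})$, and the ``moreover'' clause.} The equivalence $(\mathrm{ii})\Leftrightarrow(\mathrm{iii})$ is the uniqueness of positive selfadjoint $k$th roots in the functional calculus. For $(\mathrm{ii})\Leftrightarrow(\mathrm{iv})$, let $F$ be the spectral measure of $T^*T$, supported in $\rbb_+$; then $(T^*T)^k=\int x^k\,\D F$ by the functional calculus, so (ii) is exactly (iv) with $E:=F$. Conversely, the case $k=1$ of (iv) reads $T^*T=\int x\,\D E$, and uniqueness in the spectral theorem forces $E=F$; substituting back recovers (ii). This identification yields both the uniqueness assertion in the ``moreover'' clause and, when $T\in\ogr(\hh)$, the support bound $E((\|T\|^2,\infty))=0$ via $\sigma(T^*T)\subseteq[0,\|T\|^2]$. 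For $(\mathrm{i})\Rightarrow(\mathrm{ii})$, the commutation $E(\varDelta)T\subseteq TE(\varDelta)$ extends by approximation with simple functions to $f(|T|)T\subseteq Tf(|T|)$ for every bounded Borel $f$; on each spectral subspace $\hh_m:=E([0,m])\hh$ the restriction of $T$ is bounded and commutes with $T^*T=|T|^2$, and an induction writing $T^{*(k+1)}T^{k+1}=T^{*k}(T^*T)T^k$ and pushing $T^*T$ through $T^k$ yields (ii) on the core $\bigcup_m\hh_m$, whence the operator identity follows by closure.

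\emph{Main obstacle: $(\mathrm{iv})\Rightarrow(\mathrm{i})$.} This is the substantive step. By the previous paragraph (iv) is equivalent to (ii), so one has the full family $T^{*k}T^k=|T|^{2k}$ for every $k\in\zbb_+$. Using the polar decomposition $T=V|T|$, with $V$ a partial isometry whose initial projection $V^*V$ is the orthogonal projection onto $\overline{\ran(|T|)}$, the case $k=2$ expands via $V^*V|T|=|T|$ to $|T|V^*|T|^2V|T|=|T|^4$. Observing that $V^*|T|^2V\cdot|T|h-|T|^3h$ lies simultaneously in $\overline{\ran(|T|)}$ and in $\jd{|T|}=\overline{\ran(|T|)}^\perp$, it must vanish, yielding $V^*|T|^2V\cdot|T|=|T|^3$. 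Applying the analogous argument inductively to each $k$ produces $V^*|T|^{2j}V\cdot|T|=|T|^{2j+1}$ for every $j\in\natu$, and polynomial approximation on the bounded spectral subspaces $\hh_m$ promotes this to $V^*\psi(|T|)V=\psi(|T|)$ on $\overline{\ran(|T|)}\cap\hh_m$ for every bounded Borel $\psi$. A standard unitary-invariance argument then yields $VE(\varDelta)=E(\varDelta)V$ on $\hh$, equivalently $V|T|^2=|T|^2V$, which by Jab{\l}o{\'n}ski's criterion is exactly quasinormality of $T$. The principal technical hurdles are the domain bookkeeping for the unbounded powers $T^k$ and the verification that $\bigcup_m\hh_m$ is a core for all operators in sight; both reduce, via restriction to $\hh_m$ where everything becomes bounded, to Embry's original polar-decomposition argument.
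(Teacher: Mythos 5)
The paper does not actually prove Theorem~\ref{embry}: it is imported from Embry \cite{Embry73} (bounded case) and \cite[Theorem~3.6]{jabl14} (unbounded case), and the only argument the authors supply is the parenthetical remark preceding the statement, namely that the ``moreover'' clause follows because \eqref{kul-3} with $k=1$ identifies $E$ as the spectral measure of $T^*T$. Your handling of uniqueness and of the support bound $E((\|T\|^2,\infty))=0$ is exactly that remark, and your reduction of (iv) to (ii) via the $k=1$ case is the same observation; so for the ``moreover'' part and for (ii)$\Leftrightarrow$(iii)$\Leftrightarrow$(iv) you and the paper agree. For the substantive equivalence (i)$\Leftrightarrow$(ii) there is no in-paper proof to compare against; what you have written is a reconstruction of the cited argument, and its architecture (polar decomposition, $V^*|T|^{2j}V|T|=|T|^{2j+1}$, upgrading to spectral projections, then commutation of $V$ with $E$) is the correct one.

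Two steps of your sketch are, however, genuinely underspecified. First, the ``standard unitary-invariance argument'' does not apply as stated: $V$ is only a partial isometry, and passing from $V^*E(\varDelta)V=E(\varDelta)|_{\overline{\ran(|T|)}}$ to $E(\varDelta)V=VE(\varDelta)$ requires the equality case of the Schwarz--Kadison inequality $\bigl(V^*E(\varDelta)V\bigr)^2\Le V^*E(\varDelta)V$ together with the fact that the initial projection $V^*V$ equals $E((0,\infty))$ and hence commutes with every $E(\varDelta)$; this is precisely the content of the paper's Lemma~\ref{kadlemma}, deployed for the identical purpose in the proof of Theorem~\ref{bblem}. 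Second, ``polynomial approximation on the bounded spectral subspaces $\hh_m$'' is not immediate, because $V$ need not map $\hh_m$ into itself: the measure $\langle E(\cdot)Vh,Vh\rangle$ is not a priori compactly supported even for $h\in\hh_m$, so upgrading the monomial identities to all bounded Borel functions requires a determinacy argument for Stieltjes moment sequences admitting a compactly supported representing measure (the device of Section~\ref{Sec.5}), not mere norm approximation. Both gaps are fillable with tools already present in the paper, and with them filled your proof is the standard one; but as written these are the two places where the argument does not yet close.
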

The above characterizations of quasinormal operators were
invented by M. R. Embry for bounded operators (see {\cite[p.\
63]{Embry73}}) and then extended to unbounded ones by Z. J.
Jab{\l}o{\'n}ski, I. B. Jung and the second-named author (see
\cite[Theorem~3.6]{jabl14}; cf.\ \cite{uch93}). Although the
condition (ii) looks more elaborate than $T(T^*T)=(T^*T)T$,
it allows us to use the techniques related to positive
operators including spectral theorem, the Stone-von Neumann
functional calculus, operator monotone and operator convex
functions and operator inequalities.

The condition (ii) of Theorem~\ref{embry} leads to the
problem of reduced Embry's characterization of quasinormality
(see \cite[Problem~1.4]{P-S21}). This problem, to some extent
related to the theory of operator monotone and operator
convex functions, has been studied by several authors (see
e.g.,\ \cite{uch93,uch01,Jib10,jabl14,pp16,pp18}). In
particular, it was shown in \cite[Example~5.5]{jabl14} (see
also \cite[Theorem~4.3]{pp16}) that for every integer $n\Ge
2$, there exists an operator $T\in \ogr(\hh)$ such that
  \begin{equation} \label{qqq}
\text{$T^{*n}T^{n}=(T^{*} T)^n$ and $T^{*k}T^{k}
\neq (T^{*} T)^k$ for all
$k\in\{2,3,4,\ldots\}\setminus \{n\}$.}
  \end{equation}

The following result, which is closely related to
Theorem~\ref{embry}(iii), plays a key role in the proof of
Theorem~\ref{twhyp} (see Theorem~\ref{rowm}). In particular,
it shows that an operator $T\in \ogr(\hh)$ is of class A if
and only if the sequence
$\{(T^{*k}T^k)^{\frac1k}\}_{k=1}^{\infty}$ is monotonically
increasing.
   \begin{theorem}[{\cite[Theorem~1]{Ito02}; cf.\ \cite[Theorems~2 \&
3]{Ito99} and \cite[Theorem~1]{Yama99}}]
\label{yama} If $T\in \ogr(\hh)$ is of class A
$($in particular, $p$-hyponormal or
log-hyponormal\/$)$, then the sequence
$\{(T^{*k}T^k)^{\frac1k}\}_{k=1}^{\infty}$
$($resp.,
$\{(T^{k}T^{*k})^{\frac1k}\}_{k=1}^{\infty}$$)$
is monotonically increasing $($resp.,
monotonically decreasing$)$, that is
   \begin{equation*}
T^*T \Le (T^{*2}T^2)^{\frac12} \Le
(T^{*3}T^3)^{\frac13} \Le \ldots,
   \end{equation*}
and
   \begin{equation*}
TT^* \Ge (T^2T^{*2})^{\frac12} \Ge
(T^3T^{*3})^{\frac13} \Ge \ldots.
   \end{equation*}
   \end{theorem}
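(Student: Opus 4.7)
The plan is to prove both monotonicity statements by induction on $k$, with the $k=1$ instance of the increasing chain being the defining inequality of class A, namely $T^*T \Le (T^{*2}T^2)^{1/2}$. The inductive step, lifting $(T^{*k}T^k)^{1/k} \Le (T^{*(k+1)}T^{k+1})^{1/(k+1)}$ from the class A hypothesis, requires a tool more flexible than the L\"owner--Heinz inequality because the exponent ratio $(k+1)/k$ exceeds $1$; the natural candidate is the Furuta inequality.

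First I would record the identity $T^{*(k+1)}T^{k+1} = T^*(T^{*k}T^k)T$ together with the polar decomposition $T = U|T|$, which allow one to express $T^{*(k+1)}T^{k+1}$ as a conjugate-like expression built from $|T|$ and the partial isometry $U$. Next, I would invoke Furuta's inequality in the form: if $X \Ge Y \Ge 0$, then for $p \Ge 0$, $r \Ge 0$ and $q \Ge 1$ with $(1+2r)q \Ge p+2r$,
\[
(Y^r X^p Y^r)^{1/q} \Ge Y^{(p+2r)/q}, \qquad X^{(p+2r)/q} \Ge (X^r Y^p X^r)^{1/q}.
\]
Starting from $X = (T^{*2}T^2)^{1/2}$, $Y = T^*T$, one iterates this inequality combined with the class A hypothesis, bookkeeping the exponents so that the output of step $k$ becomes the input of step $k+1$, with $p = k+1$, $q = (k+1)/k$, $r = 1/2$ being the natural choices. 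For the decreasing companion sequence $\{(T^k T^{*k})^{1/k}\} = \{|T^{*k}|^{2/k}\}$, I would exploit the polar-decomposition identity $T^kT^{*k} = U\,(T^{*k}T^k)\,U^*$ on $\overline{\ran(U^*)}$, together with the ``dual half'' of Furuta's inequality, to convert the increasing statement about $|T^k|^{2/k}$ into the decreasing statement about $|T^{*k}|^{2/k}$.

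The main obstacle is the inductive step: the positive operators $(T^{*k}T^k)^{1/k}$ for distinct $k$ generally fail to commute, so the inequality cannot be reduced to scalar calculations, and the natural exponents exceed $1$, so L\"owner--Heinz is unavailable; Furuta's inequality is precisely the instrument needed, but verifying that the parameter constraint $(1+2r)q \Ge p+2r$ matches the arithmetic of each step $k \to k+1$ requires careful choice of $p, q, r$ and may need the grand Furuta inequality or an Ando--Hiai-type refinement rather than the bare Furuta statement. An additional subtlety arises for the decreasing sequence: class A is not preserved under taking adjoints --- $T^*$ need not be of class A when $T$ is --- so one cannot simply apply the increasing result with $T$ replaced by $T^*$; the dual statement must be extracted through the polar decomposition and the unitary-equivalence-like relation between $|T^k|^2$ and $|T^{*k}|^2$ on complementary subspaces.
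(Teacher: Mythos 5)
The paper does not prove Theorem~\ref{yama}: it is quoted verbatim from Ito \cite{Ito02} (cf.\ \cite{Ito99,Yama99}), so there is no in-paper argument to compare yours against. Judged on its own terms, your plan follows the same general route as the cited proofs --- induction on $k$ driven by Furuta-type inequalities, with a separate ``dual'' argument for the chain involving $T^kT^{*k}$ --- and you correctly identify the two real obstacles (exponent ratios exceeding $1$ rule out L\"owner--Heinz alone, and class A is not adjoint-invariant, so the second chain is not the first chain applied to $T^*$).

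However, as written the proposal has a genuine gap in both halves. For the increasing chain, the inductive step is only announced, not performed: feeding $X=(T^{*2}T^2)^{1/2}$, $Y=T^*T$, $p=k+1$, $q=(k+1)/k$, $r=1/2$ into the bare Furuta inequality does not by itself produce $(T^{*(k+1)}T^{k+1})^{\frac{1}{k+1}} \Ge (T^{*k}T^k)^{\frac1k}$; one must first rewrite $T^{*(k+1)}T^{k+1}=|T|\,U^*(T^{*k}T^k)U\,|T|$ and then control the resulting non-commuting expression, and the arguments of \cite{Ito99,Ito02} do this not with Furuta's inequality alone but with a satellite result (the Ito--Yamazaki equivalence between $(B^{r/2}A^pB^{r/2})^{\frac{r}{p+r}}\Ge B^r$ and $A^p\Ge(A^{p/2}B^rA^{p/2})^{\frac{p}{p+r}}$, together with its monotonicity in the parameters $p$ and $r$). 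You flag this yourself (``may need the grand Furuta inequality or an Ando--Hiai-type refinement''), which is honest but confirms that the decisive step is missing rather than verified. For the decreasing chain, the proposed conversion via $T^kT^{*k}=U(T^{*k}T^k)U^*$ does not work as stated: the correct identity is $T^kT^{*k}=U_k(T^{*k}T^k)U_k^*$, where $U_k$ is the partial isometry in the polar decomposition of $T^k$ (not $U^k$, and not a single fixed $U$), and since these $U_k$ vary with $k$, conjugation does not transport the order relation between consecutive terms of the sequence. In the cited literature the chain $TT^*\Ge(T^2T^{*2})^{1/2}\Ge\cdots$ is established by a parallel induction of its own, using the dual half of the same Furuta machinery, rather than being deduced from the first chain.
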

We conclude this section with a more
detailed discussion of
Figure~\ref{inkluzje}. That hyponormal
operators are of class A, can be
justified as follows. If $T^*T \Ge TT^*$,
then $T^*(T^*T)T \Ge T^*(TT^*)T$ and thus
by the L\"owner-Heinz inequality with
exponent $\frac 1 2$ (see
\cite{Lo34,He51}), $(T^{*2}T^2)^{\frac12}
\Ge T^*T$. This fact also follows from a
more general result due to T. Yamazaki,
which shows in particular that
$p$-hyponormal operators with $p\in
(0,1]$ are of class A (see
\cite[Theorem~1(i)]{Yama99}). In fact,
$p$-hyponormal operators are always of
class A because $p$-hyponormal operators
are $q$-hyponormal whenever $0 < q < p <
\infty$ (apply the L\"owner-Heinz
inequality with exponent $\frac q p$). It
is well known that invertible
$p$-hyponormal operators are
log-hyponormal (see \cite[Theorem~1 in \S
3.4.2]{Fur01}). However, one can
construct a log-hyponormal operator that
is not $p$-hyponormal for any $p\in
(0,\infty)$ (see
\cite[Example~12]{Tana99}). In turn,
every log-hyponormal operator is of class
A and every class A operator is
paranormal (see \cite[Theorem~1 in \S
3.5.1]{Fur01}). Note also that strict
inclusions appear in
Figure~\ref{inkluzje} only if $\hh$ is
infinite dimensional (see
\cite[Theorem~2.2]{Ist67}). More
information on the classes of bounded
operators considered in this paper can be
found in \cite{con91,Fur01}.
   \section{\label{Sec.3}Proof of the intertwining theorem}
In this section we give a proof of Theorem~\ref{bblem} based
on a recent result of the authors (see
\cite[Theorem~4.2]{P-S21}). In fact, we need a version of it
for positive operator valued measures that are not
necessarily normalized.
   \begin{theorem} \label{pupra}
Let $T\in \ogr(\hh)$ be a positive injective
operator and $\alpha,\beta$ be distinct positive
real numbers. Assume that $F\colon
\borel(\rbb_+) \to \ogr(\hh)$ is a POV measure
with compact support. Then the following
conditions are~equivalent{\em :}
   \begin{enumerate}
   \item[(i)] $F$ is the spectral
measure of $T$,
   \item[(ii)] $T^p =\int_{\rbb_+}
x^p F(\D x)$ for $p= \alpha, \beta$ and
$F(\rbb_+)\Le I$.
   \end{enumerate}
   \end{theorem}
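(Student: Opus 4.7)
The plan is to reduce (ii)$\Rightarrow$(i) to the normalized version \cite[Theorem~4.2]{P-S21} of the same statement by absorbing the defect $I-F(\rbb_+)$ into a point mass at $0$. The implication (i)$\Rightarrow$(ii) is just the standard Stone--von Neumann functional calculus for the positive operator $T$, which automatically gives both $F(\rbb_+)=I$ and $T^p=\int_{\rbb_+}x^p\,F(\D x)$ for every $p>0$, so the real content lies in the converse.

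For the nontrivial direction, first I would define a POV measure $G\colon\borel(\rbb_+)\to\ogr(\hh)$ by
\[
G(\varDelta):=F(\varDelta)+\chi_{\varDelta}(0)\,(I-F(\rbb_+)),\qquad \varDelta\in\borel(\rbb_+),
\]
where $I-F(\rbb_+)\Ge 0$ by assumption. It is routine to check that $G$ is positive, countably additive, of compact support, and satisfies $G(\rbb_+)=I$, so $G$ is a semispectral measure. Since $\alpha,\beta>0$, the atomic correction at $0$ contributes nothing to the moment integrals, hence
\[
\int_{\rbb_+} x^p\,G(\D x)=\int_{\rbb_+} x^p\,F(\D x)=T^p,\qquad p=\alpha,\beta.
\]
At this point \cite[Theorem~4.2]{P-S21}, applied to the positive injective operator $T$ and the semispectral measure $G$, forces $G$ to coincide with the spectral measure $E$ of $T$.

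It then remains to transfer this conclusion back to $F$. Because $T$ is injective, $\jd{T}=\{0\}$ and hence $E(\{0\})=0$; thus $0=G(\{0\})=F(\{0\})+(I-F(\rbb_+))$, and positivity of both summands yields $F(\{0\})=0$ together with $F(\rbb_+)=I$. Since $F$ and $G$ already agree on $\borel((0,\infty))$, we conclude $F=G=E$.

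The main (and essentially only) obstacle is the bookkeeping in the reduction: one must simultaneously verify that $G$ is normalized, that its $\alpha$th and $\beta$th moments still equal $T^\alpha$ and $T^\beta$, and that $F$ can in fact be recovered from $G$. The positivity of $\alpha$ and $\beta$ is exactly what makes the point-mass correction invisible to the moment data, while injectivity of $T$ is precisely what annihilates that correction a posteriori, closing the loop.
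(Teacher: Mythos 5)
Your proposal is correct and follows essentially the same route as the paper: both add the defect $I-F(\rbb_+)$ as a point mass at $0$ to form a normalized semispectral measure with the same $\alpha$th and $\beta$th moments, invoke \cite[Theorem~4.2]{P-S21} (the paper also cites its corrigendum \cite{P-S22}) to identify it with the spectral measure of $T$, and then use injectivity of $T$ to force $E(\{0\})=0$, hence $F(\rbb_+)=I$ and $F=E$.
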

   \begin{proof}
   (i)$\Rightarrow$(ii) It is obvious.

   (ii)$\Rightarrow$(i) Let $E\colon
\borel(\rbb_+) \to \ogr(\hh) $ be the spectral
measure of $T$. Since $F(\rbb_+)\Le I$, the map
$\widetilde{F}\colon \borel(\rbb_+) \to
\ogr(\hh)$ defined by
   \begin{align*}
\widetilde F(\varDelta) = F(\varDelta) +
\delta_0(\varDelta)(I-F(\rbb_+)),\quad \varDelta
\in \borel(\rbb_+),
   \end{align*}
is a semispectral measure. It is easily seen
that $\widetilde{F}$ has compact support and
   \begin{align*}
T^p =\int_{\rbb_+} x^p \widetilde F(\D x), \quad
p= \alpha, \beta.
   \end{align*}
By \cite[Theorem~4.2]{P-S21} and
\cite[Theorem]{P-S22}, $\widetilde F$ is the
spectral measure of $T$, which yields
   \begin{align}\label{ewff}
E = \widetilde F = F + \delta_0(I-F(\rbb_+)).
   \end{align}
Since $\nul(T) = \{0\}$, we see that
$E(\{0\})=0$ and thus
   \begin{align*}
0 = E(\{0\}) \overset{\eqref{ewff}}{=} F(\{0\})
+ (I-F(\rbb_+)).
   \end{align*}
This implies that $F(\rbb_+)=I$ and consequently
$\widetilde F = F$. Therefore $F$ is the
spectral measure of $T$. This completes the
proof.
   \end{proof}
   We also need the following result which gives a necessary
and sufficient condition for equality to hold in a
Kadison-type inequality (cf.\ \cite[Lemma~3.1]{P-S22-b}).
   \begin{lemma} \label{kadlemma}
Let $\hh$ and $\kk$ be complex Hilbert spaces, $V \in
\ogr(\hh,\kk)$ and $T\in \ogr(\kk)$. Suppose that $\|V\| \Le
1$. Then the following inequality is valid{\em :}
   \begin{equation}\label{kadP}
(V^*TV)^*(V^*TV) \Le V^*T^*TV.
   \end{equation}
Moreover, equality holds in \eqref{kadP} if and only if
$TV=VV^*TV$.
   \end{lemma}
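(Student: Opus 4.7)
The plan is to prove the inequality by directly reducing to a manifestly positive expression, and then read off the equality condition from where that expression can vanish. First I would compute
\begin{align*}
(V^*TV)^*(V^*TV) = V^*T^*V V^*TV,
\end{align*}
so that
\begin{align*}
V^*T^*TV - (V^*TV)^*(V^*TV) = V^*T^*(I_\kk - VV^*)TV.
\end{align*}
The hypothesis $\|V\|\Le 1$ gives $\|V^*\|\Le 1$, hence $VV^* \Le I_\kk$, so $P:=I_\kk - VV^* \in \ogr_+(\kk)$ and $P^{1/2}$ is well-defined. Then
\begin{align*}
V^*T^*(I_\kk - VV^*)TV = (P^{1/2}TV)^*(P^{1/2}TV) \Ge 0,
\end{align*}
which gives \eqref{kadP}.

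For the equality characterization, equality in \eqref{kadP} holds if and only if $(P^{1/2}TV)^*(P^{1/2}TV)=0$, equivalently $P^{1/2}TV = 0$. Using the standard fact that $\jd{P^{1/2}} = \jd{P}$ for a positive operator (which follows at once from the spectral theorem, since the functions $x$ and $x^{1/2}$ have the same zero set on $\rbb_+$), this is equivalent to $PTV = 0$, i.e.\ $(I_\kk - VV^*)TV = 0$, which is precisely $TV = VV^*TV$.

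This argument is essentially computational, so I do not anticipate a genuine obstacle; the only point requiring a small amount of care is the kernel identification $\jd{P^{1/2}} = \jd{P}$ used to pass from $P^{1/2}TV = 0$ to the cleaner condition $TV = VV^*TV$. Keeping track of whether each operator acts on $\hh$ or on $\kk$ (with $V^*T^*TV$ and $(V^*TV)^*(V^*TV)$ living in $\ogr_+(\hh)$, while $P$ lives in $\ogr_+(\kk)$) is the only notational hazard.
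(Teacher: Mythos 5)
Your proposal is correct and follows essentially the same route as the paper: both rewrite the difference as $(TV)^*(I_{\kk}-VV^*)TV\Ge 0$ and characterize equality via the kernel identity $\jd{(I_{\kk}-VV^*)^{1/2}}=\jd{I_{\kk}-VV^*}$, which is exactly your passage from $P^{1/2}TV=0$ to $PTV=0$. No gaps; the bookkeeping of which operators act on $\hh$ versus $\kk$ is handled correctly.
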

   \begin{proof}
Since $\|V^*\|\Le 1$, we deduce that $I_{\kk}-VV^* \Ge 0$,
and therefore
   \begin{align} \label{kadP2}
V^*T^*TV-(V^*T^*V)(V^*TV)& = (TV)^* (I_{\kk}-VV^*)TV\Ge 0.
   \end{align}
This yields \eqref{kadP}.

It remains to prove the ``moreover'' part. It
follows from \eqref{kadP2} that equality holds
in \eqref{kadP} if and only if
   \begin{align*}
\ran(TV)\subseteq\jd{(I_{\kk}-VV^*)^\frac{1}{2}}=\jd{I_{\kk}-VV^*},
   \end{align*}
or equivalently if and only if $TV=VV^*TV$.
   \end{proof}
   \begin{proof}[Proof of Theorem~\ref{bblem}]
(i)$\Rightarrow$(ii) It follows from the
inequality $A^*A \Le B$ and the Douglas
factorization theorem (see
\cite[Theorem~1]{Dou66}) that there exists an
operator $Q\in \ogr(\hh,\kk)$ such that
   \begin{equation} \label{doulem}
\|Q\|\Le 1\quad \text{and} \quad
A=QB^\frac{1}{2}.
   \end{equation}
Since $A^*C^sA =B^{s+1}$ for $s=\alpha,\beta$,
we infer from \eqref{doulem} that
   \begin{equation}\label{vvvp}
B^\frac{1}{2}Q^*C^sQB^\frac{1}{2}=B^\frac{1}{2}B^{s}B^\frac{1}{2},
\quad s=\alpha,\beta.
   \end{equation}
Set $\hh_0 = \overline{\ran(B)}$. Define the
operator $Q_0\in \ogr(\hh_0,\kk)$ by $Q_0 h =Qh$
for $h\in \hh_0$. Observe that $Q_0^*\in
\ogr(\kk,\hh_0)$ is given by
   \begin{align}  \label{takijaki}
Q_0^*=P_{\hh_0}Q^*,
   \end{align}
where $P_{\hh_0} \in \ogr(\hh)$ is the
orthogonal projection of $\hh$ onto $\hh_0$.
Note that $\hh_0$ reduces $B$ to $B|_{\hh_0}\in
\ogr_+(\hh_0)$ and that the identity
\eqref{vvvp} is equivalent to
   \begin{align}  \label{prudtr}
\langle Q^*C^sQB^\frac{1}{2}h,
B^\frac{1}{2}h^\prime \rangle=\langle
B^{s}B^\frac{1}{2}h,
B^\frac{1}{2}h^\prime\rangle, \quad
h,h^\prime\in \hh, \; s=\alpha,\beta.
   \end{align}
Since $\overline{\ran(B)} =
\overline{\ran(B^\frac{1}{2})}$, \eqref{prudtr}
holds if and only if
   \begin{align*}
\langle Q^*C^sQh_0,h_0^\prime\rangle =\langle
B^{s}h_0,h_0^\prime\rangle, \quad
h_0,h_0^\prime\in \hh_0,\; s=\alpha,\beta.
   \end{align*}
Combined with \eqref{takijaki}, this yields
   \begin{align}\label{ccp}
Q_0^*C^sQ_0= (B|_{\hh_0})^{s}, \quad
s=\alpha,\beta.
   \end{align}
Let $E\colon \borel(\rbb_+) \to \ogr(\kk) $ be
the spectral measure of $C$. Then \eqref{ccp}
implies that
   \begin{align*}
(B|_{\hh_0})^s = \int_{\rbb_+} x^s F(\D x),
\quad s= \alpha, \beta,
   \end{align*}
where $F\colon \borel(\rbb_+) \to \ogr(\hh_0) $
is the POV measure with compact support defined
by
   \begin{align} \label{eqeq}
F(\varDelta) = Q_0^*E(\varDelta)Q_0,\quad
\varDelta \in \borel(\rbb_+).
   \end{align}
It follows from \eqref{doulem} that $\|Q_0\| \Le
1$. Since $\jd{B|_{\hh_0}}=\{0\}$ and
   \begin{align*}
F(\rbb_+)=Q_0^*E(\rbb_+)Q_0 = Q_0^*Q_0\Le I_{\hh_0},
   \end{align*}
we deduce from Theorem~\ref{pupra} that $F$ is
the spectral measure of $B|_{\hh_0}$. In
particular, we have
   \begin{align}\label{qoi}
I_{\hh_0} = F(\rbb_+) \overset{\eqref{eqeq}}=
Q_0^*E(\rbb_+)Q_0 = Q_0^*Q_0.
   \end{align}
Note now that
   \begin{align*}
Q_0^*E(\varDelta)^2Q_0 \overset{\eqref{eqeq}}{=}
F(\varDelta)=(F(\varDelta))^2
\overset{\eqref{eqeq}}{=}
(Q_0^*E(\varDelta)Q_0)^2,\quad \varDelta \in
\borel(\rbb_+).
   \end{align*}
By Lemma~\ref{kadlemma}, this gives
   \begin{align*}
E(\varDelta)Q_0 {=} Q_0Q_0^*E(\varDelta) Q_0
\overset{\eqref{eqeq}}{=} Q_0 F(\varDelta),\quad
\varDelta \in \borel(\rbb_+).
   \end{align*}
Using \cite[Proposition~5.15]{Sch12}, we obtain
   \begin{align} \label{namur}
CQ_0 {=} Q_0B|_{\hh_0}.
   \end{align}
Hence, we have
   \begin{align*}
ABh_0 & \overset{\eqref{doulem}}{=}Q
B^\frac{1}{2} Bh_0
   \\
&\hspace{1ex} =Q_0
B|_{\hh_0}(B|_{\hh_0})^\frac{1}{2}h_0
   \\
&\hspace{-.5ex}\overset{\eqref{namur}}= CQ_0
(B|_{\hh_0})^\frac{1}{2} h_0
   \\
& \overset{\eqref{doulem}}{=}CAh_0,\quad
h_0\in\hh_0.
   \end{align*}
This shows that
   \begin{align}\label{naho}
AB|_{\hh_0}=CA|_{\hh_0}.
   \end{align}
However, $\hh_0^\perp=\jd{B}$ and thus
$A|_{\hh_0^\perp}=0$ because
   \begin{equation*}
\|Ah\|^2 = \langle A^{*}Ah, h \rangle \Le
\langle Bh,h \rangle =0,\quad h \in \jd{B}.
   \end{equation*}
As a consequence, we get
     \begin{align}\label{nahop}
    AB|_{\hh_0^\perp}=0=CA|_{\hh_0^\perp}.
\end{align}
It follows from \eqref{naho} and \eqref{nahop}
that $AB=CA$.

It remains to show that $A^*A = B$. For, note
that $\hh_0$ reduces $A^*A$ and $B$, and
   \begin{align*}
\langle A^*Ah_0, h_0^\prime \rangle &= \langle
Ah_0, Ah_0^\prime \rangle
   \\
&
\hspace{-1ex}\overset{\eqref{doulem}}{=}\langle
Q B^\frac{1}{2} h_0, Q B^\frac{1}{2}
h_0^\prime\rangle
   \\
&=\langle Q_0 B^\frac{1}{2}
h_0, Q_0 B^\frac{1}{2} h_0^\prime \rangle
   \\
& = \langle Q_0^*Q_0 B^\frac{1}{2} h_0,
B^\frac{1}{2} h_0^\prime \rangle
   \\
& \hspace{-1.5ex}\overset{\eqref{qoi}}{=}
\langle B^\frac{1}{2} h_0, B^\frac{1}{2}
h_0^\prime \rangle
   \\
&= \langle Bh_0, h_0^\prime \rangle,\quad h_0,
h_0^\prime\in \hh_0,
   \end{align*}
which implies that $A^*A|_{\hh_0}=B|_{\hh_0}$.
Clearly, $A^*A|_{\hh_0^\perp} = 0 =
B|_{\hh_0^\perp}$, so $A^*A = B$.

   (ii)$\Rightarrow$(i) It suffices to use the
fact that $AB=CA$ implies $AB^s=C^sA$ for all
positive real number $s$. This completes the
proof.
  \end{proof}
   \begin{rem}
As shown in Section~\ref{Sec.1}, Theorem~\ref{bblem} implies
Theorem~\ref{achtwdw}. However, the authors see no direct way
to deduce Theorem~\ref{bblem} from Theorem~\ref{achtwdw} (the
famous Berberian matrix trick does not give the expected
result). On the other hand, from \cite[Lemma~3.7]{pp18} one
can deduce its version in which the operator $B$ is not
assumed to be injective. Indeed, suppose that the condition
(i) of Theorem~\ref{achtwdw} hold. We show that $A^*A=B$, $A$
commutes with $B$, $\jd{B}$ reduces $A$ and $A|_{\jd{B}}=0$
(the converse implication is obvious). First, we claim that
$A|_{\nul(B)}=0$. Indeed, if $h\in \nul(B)$, then
   \begin{equation*}
\|Ah\|^2 = \langle A^{*}Ah, h \rangle \Le
\langle Bh,h \rangle =0,
   \end{equation*}
so $h\in \nul(A)$. Thus the operators $A$ and
$B$ have the block matrix representations
   \begin{equation}\label{deks}
A = \left[\begin{matrix} \tilde A & 0 \\ C & 0
\end{matrix} \right] \quad  \text{and} \quad B =
\left[ \begin{matrix} \tilde B & 0 \\ 0 & 0
\end{matrix} \right]
   \end{equation}
with respect to the orthogonal decomposition
$\hh = \overline{\ran(B)} \oplus \nul (B)$,
where
   \begin{align*} \label{duwnr}
\tilde A = P A|_{\overline{\ran (B)}}, \quad
\tilde B = B|_{\overline{\ran (B)}}, \quad
C=(I-P)A|_{\overline{\ran( B)}},
   \end{align*}
and $P$ is the orthogonal projection of $\hh$
onto $\overline{\ran (B)}$. This implies that
   \begin{equation} \label{deksm}
\left[\begin{matrix} \tilde A^* \tilde A+C^*C &
0 \\0 & 0 \end{matrix} \right] = \left[
\begin{matrix} \tilde A^* & C^*
\\0 & 0 \end{matrix} \right] \left[\begin{matrix}
\tilde A & 0 \\C & 0 \end{matrix} \right] = A^*A
\Le B = \left[\begin{matrix} \tilde B & 0
\\0 & 0 \end{matrix} \right].
   \end{equation}
Hence
   \begin{align} \label{nurw}
\tilde A^* \tilde A+C^*C\Le \tilde B,
   \end{align}
which yields $\tilde A^* \tilde A\Le \tilde B$.
Observe that
   \begin{equation*}\label{deksmu}
A^*B^s A = \left[\begin{matrix} \tilde A^* & C^*
\\0 & 0 \end{matrix} \right] \left[\begin{matrix}
\tilde B^s & 0\\0 & 0 \end{matrix} \right]
\left[\begin{matrix} \tilde A & 0 \\C & 0
\end{matrix} \right] = \left[\begin{matrix}
\tilde A^*\tilde B^s\tilde A & 0 \\0 & 0
\end{matrix} \right], \quad s \in (0,\infty),
   \end{equation*}
and
   \begin{equation*}\label{deksmb}
B^{s+1}=\left[ \begin{array}{cc} \tilde B^{s+1}
& 0 \\0 & 0 \end{array} \right], \quad s \in
(0,\infty).
   \end{equation*}
Combined with the equality in Theorem~\ref{achtwdw}(i), this
shows that $\tilde A^*\tilde B^s \tilde A = \tilde B^{s+1}$
for $s=\alpha,\beta$. Clearly $\nul(\tilde B) = \{0\}$.
Therefore, by \cite[Lemma~3.7]{pp18}, $\tilde A$ commutes
with $\tilde B$ and $\tilde A^*\tilde A= \tilde B$. This and
\eqref{nurw} implies that $C=0$. Thus by \eqref{deks},
$\nul(B)$ reduces $A$. Finally, it follows from \eqref{deks}
and \eqref{deksm} that $AB=BA$ and $A^* A= B$, which
completes the proof.
   \hfill $\diamondsuit$
   \end{rem}
   \section{\label{Sec.4}Class A $n$th roots of bounded quasinormal operators}
The main purpose of this section is to prove
Theorem~\ref{twhyp}. In view of Embry's characterization of
quasinormal operators (see Theorem~\ref{embry}(iii)),
Problem~\ref{prob2} for operators $T$ of class A is closely
related to the question when the monotonically increasing
sequence $\big\{(T^{*k}T^k)^{\frac1k}\big\}_{k=1}^{\infty}$
appearing in Theorem~\ref{yama} is constant. The answer given
in Theorem~\ref{rowm} below shows that this is the case when
the distance between equal terms of the sequence is at least
two (see also Problem~\ref{as}). As a consequence, we obtain
an affirmative solution to Problem~\ref{prob2} for operators
of class A (see the second statement of Theorem~\ref{rowm}).
   \begin{theorem} \label{rowm}
If $T\in \ogr(\hh)$ is of class A $($in
particular, $p$-hyponormal or
log-hyponormal\/$)$, then any of the following
statements implies that $T$ is quasinormal{\em
:}
   \begin{enumerate}
   \item[(i)] $(T^{*n}T^n)^{\frac1n} = (T^{*k}T^k)^{\frac1k}$
for some positive integers $k,n$ such that
$k-n\Ge 2$,
   \item[(ii)] $T^n$ is quasinormal for some positive integer $n$.
   \end{enumerate}
   \end{theorem}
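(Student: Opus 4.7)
My plan is to reduce both parts to Theorem~\ref{achtwdw}, with Theorem~\ref{yama} supplying the monotonicity of the sequence $\{(T^{*j}T^j)^{\frac1j}\}_{j=1}^{\infty}$ and Theorem~\ref{embry} converting quasinormality of $T^n$ into an equality of this sequence at two indices. The common auxiliary operator will be $S:=(T^{*n}T^n)^{\frac1n}$, and the key mechanism is that once the monotone sequence is forced to be flat between indices $n$ and $k$ with $k-n\Ge 2$, the factorization $T^{*(m+1)}T^{m+1}=T^*(T^{*m}T^m)T$ yields \emph{two} distinct exponents $s$ satisfying $T^*S^sT=S^{s+1}$, which is precisely the input that Theorem~\ref{achtwdw} consumes.

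For~(i), I would first invoke Theorem~\ref{yama}: since $T$ is of class A, the chain $T^*T\Le(T^{*2}T^2)^{\frac12}\Le\cdots$ holds, so the hypothesized equality at positions $n$ and $k$ forces $(T^{*j}T^j)^{\frac1j}=S$ for every integer $j$ with $n\Le j\Le k$. Hence $T^{*j}T^j=S^j$ on this range, and factoring off one copy of $T$ on each side gives $T^*S^mT=S^{m+1}$ for $m=n,n+1,\ldots,k-1$. The assumption $k-n\Ge 2$ is exactly what delivers the two consecutive values $m=n$ and $m=n+1$. Combined with $T^*T\Le S$ (also from Theorem~\ref{yama}), the hypotheses of Theorem~\ref{achtwdw} are met with $A=T$, $B=S$, $\alpha=n$, $\beta=n+1$, yielding the quasinormality of $T$ (and, as a bonus, the identity $S=T^*T$).

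For~(ii), the case $n=1$ is trivial, so I would assume $n\Ge 2$. Applying Embry's characterization (Theorem~\ref{embry}(ii)) to the quasinormal operator $T^n$ gives $T^{*nk}T^{nk}=(T^{*n}T^n)^k$ for every $k\in\natu$; taking $k=2$ and extracting $(2n)$-th roots yields
\[
(T^{*2n}T^{2n})^{\frac{1}{2n}}=(T^{*n}T^n)^{\frac1n}.
\]
This is precisely the hypothesis of~(i) with the index pair $(n,2n)$, whose gap $n\Ge 2$ satisfies the requirement $k-n\Ge 2$. An application of~(i) then closes the argument.

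The main obstacle, as I see it, is recognizing why the gap of two is indispensable. A single identity of the form $T^*S^nT=S^{n+1}$ would not activate Theorem~\ref{achtwdw}, whose underlying moment-problem argument genuinely requires two distinct exponents; the phenomenon displayed in~\eqref{qqq} shows that reduced Embry-type criteria are delicate, so one must be careful that class A is being exploited only through the monotonicity afforded by Theorem~\ref{yama} and not through some stronger hidden inequality.
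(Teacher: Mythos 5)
Your proposal is correct and follows essentially the same route as the paper: Theorem~\ref{yama} gives the monotone chain and $T^*T\Le S$, flatness between indices $n$ and $k$ yields $T^*S^sT=S^{s+1}$ for two exponents ($s=n,n+1$; the paper just normalizes to $k=n+2$ first), and Theorem~\ref{achtwdw} finishes, while (ii) is reduced to (i) via Embry's characterization applied to $T^n$ (the paper uses the equality for all powers $l$, so it needs no separate $n=1$ case, but your $l=2$ with $n\Ge 2$ plus the trivial $n=1$ case is the same idea).
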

   \begin{proof}
Suppose that (i) holds. In view of Theorem~\ref{yama}, there
is no loss of generality in assuming that $k=n+2$. It also
follows from Theorem~\ref{yama} that
   \begin{align}\label{ineq}
T^{*}T \Le \ldots \Le (T^{*n}T^n)^{\frac1n}
   \end{align}
and
   \begin{align} \label{rysa}
(T^{*j}T^{j})^{\frac1j}
&=(T^{*n}T^n)^{\frac1n},\quad j=n+1, n+2.
   \end{align}
Set $D=(T^{*n}T^n)^{\frac1n}$. By \eqref{ineq},
we see that $T^*T\Le D$. Note further that
   \begin{equation}  \label{latfi}
T^*(T^{*n}T^{n}) T = T^{*(n+1)}T^{n+1}
\overset{\eqref{rysa}} =
(T^{*n}T^{n})^{\frac{n+1}{n}}
   \end{equation}
and
   \begin{equation*}
T^*(T^{*n}T^{n})^{\frac{n+1}{n}} T
\overset{\eqref{latfi}}= T^{*(n+2)} T^{n+2}
\overset{\eqref{rysa}}=(T^{*n}T^{n})^\frac{n+2}{n}.
   \end{equation*}
Therefore, we have
   \begin{align*}
T^*D^s T = D^{s+1}, \quad s=n, n+1.
   \end{align*}
Applying Theorem~\ref{achtwdw} to $(A,B)=(T,D)$,
we conclude that $T$ is quasinormal.

Assume now that (ii) holds. Applying
Theorem~\ref{embry}(iii) to $T^n$, we deduce
that
   \begin{align*}
(T^{*nl}T^{nl})^{\frac1{nl}}=(T^{*n}
T^n)^{\frac1n}, \quad l\in \natu,
   \end{align*}
which implies that $T$ satisfies (i). This
completes the proof.
   \end{proof}
The statement (i) of Theorem~\ref{rowm} suggests
the following problem which is of some
independent interest.
   \begin{prob}[Flatness problem] \label{as}
Let $T\in \ogr(\hh)$ be a class A operator.
Assume that for some integer $n\Ge 2$,
$(T^{*n}T^n)^{\frac1n}=
(T^{*(n+1)}T^{n+1})^{\frac1{n+1}}$. Does it
follow that the sequence
$\{(T^{*j}T^j)^{\frac1j}\}_{j=1}^\infty$ is
constant $($equivalently, $T$ is
quasinormal\/$)${\em ?}
   \end{prob}
Note that Problem~\ref{as} is interesting only for integers
$n \Ge 2$ because for $n=1$ the answer is negative (see
\eqref{qqq}).

It is worth noting that by Theorem~\ref{embry},
any quasinormal operator $X\in \ogr(\hh)$
satisfies the single equation
   \begin{equation}\label{xukl-5}
X^{*\kappa}X^{\kappa}=(X^*X)^{\kappa},
   \end{equation}
where $\kappa$ is a fixed integer greater than $1$,
but not conversely (see \eqref{qqq}). It turns out
that the class of operators satisfying \eqref{xukl-5}
for a single $\kappa$ can successfully replace
quasinormal operators in the predecessor of the
implication in Theorem \ref{twhyp} (cf.\
\cite[Theorem~4.1]{P-S21}).
   \begin{theorem}  \label{kupia}
Let $T\in \ogr(\hh)$ be a class A operator and
$n, \kappa$ be integers greater than $1$. If
$X=T^n$ satisfies the single equation
\eqref{xukl-5}, then $T$ is quasinormal.
   \end{theorem}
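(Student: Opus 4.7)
The plan is to reduce Theorem~\ref{kupia} to the first statement of Theorem~\ref{rowm}. The hypothesis $X^{*\kappa}X^{\kappa}=(X^*X)^{\kappa}$ with $X=T^n$ unpacks to the single operator equality
\begin{equation*}
T^{*n\kappa}T^{n\kappa}=(T^{*n}T^{n})^{\kappa}.
\end{equation*}
Since both sides are positive, applying the $\frac{1}{n\kappa}$ power (well-defined and unambiguous on positive operators) yields
\begin{equation*}
(T^{*n\kappa}T^{n\kappa})^{\frac{1}{n\kappa}}=(T^{*n}T^{n})^{\frac{1}{n}}.
\end{equation*}
Thus the $n$th and the $n\kappa$th terms of the sequence $\{(T^{*k}T^k)^{\frac1k}\}_{k=1}^{\infty}$ coincide.

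Next I invoke Theorem~\ref{yama}: because $T$ is of class A, this sequence is monotonically increasing. In particular, the two equal terms sit at indices $n$ and $n\kappa$ with gap $n\kappa-n=n(\kappa-1)$. Using $n\ge 2$ and $\kappa\ge 2$, we get $n\kappa-n\ge 2$, so the hypothesis of Theorem~\ref{rowm}(i) is satisfied with the pair $(k,n):=(n\kappa,n)$.

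Applying Theorem~\ref{rowm}(i) directly concludes that $T$ is quasinormal, which is what we wanted.

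The argument is essentially a one-step reduction; there is no serious obstacle, since the heavy lifting is already done inside Theorem~\ref{rowm}(i) (which in turn uses Theorem~\ref{achtwdw} applied to $(A,B)=(T,(T^{*n}T^n)^{1/n})$). The only small thing to check is that both $n\ge 2$ and $\kappa\ge 2$ are genuinely needed to guarantee gap at least two: if $\kappa=1$ the hypothesis is trivial, and if $n=1$ the gap is only $\kappa-1$, which could equal $1$; this is consistent with the remark after Problem~\ref{as} that the flatness phenomenon fails for consecutive indices (see \eqref{qqq}).
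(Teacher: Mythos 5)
Your proposal is correct and follows essentially the same route as the paper: rewrite the hypothesis as $(T^{*n\kappa}T^{n\kappa})^{\frac{1}{n\kappa}}=(T^{*n}T^{n})^{\frac{1}{n}}$ and apply Theorem~\ref{rowm}(i) with gap $n\kappa-n\ge 2$, which is exactly the paper's argument. Your additional remarks on why both $n\ge 2$ and $\kappa\ge 2$ are needed are accurate but not a departure from the paper's proof.
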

   \begin{proof}
By assumption, we have
$(T^{*n\kappa}T^{n\kappa})^{\frac1{n\kappa}}
=(T^{*n}T^n)^{\frac1n}$. Since $n, \kappa \Ge
2$, Theorem~\ref{rowm}(i) implies that $T$ is
quasinormal.
   \end{proof}
   Clearly, Theorem~\ref{kupia} implies
Theorem~\ref{twhyp}. It is also worth noting
that Theorem~\ref{kupia} is no longer true for
$n=1$ and $\kappa=2$ (see~\eqref{qqq}). In this
particular case, the single equation
\eqref{xukl-5} automatically implies that $T$ is
of class A.
   \section{\label{Sec.5}Subnormal $n$th roots of unbounded quasinormal operators}
In this section, we will give the proof of
Theorem~\ref{twsub}. Comparing this proof with
the second proof of \cite[Theorem~1.2]{P-S21},
one can find out that the case of closed,
densely defined operators is much more
elaborate. We will start with two auxiliary
lemmas.
   \begin{lemma} \label{nkngwk}
Suppose that $N$ is a normal operator in $\hh$
and $k\in \zbb_+$. Then $(N^k)^*=N^{*k}$ and
$\ddd(N^k)=\ddd(N^{*k})$.
   \end{lemma}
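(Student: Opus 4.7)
The plan is to obtain both conclusions from the Stone--von Neumann functional calculus applied to the spectral measure $E\colon \borel(\cbb) \to \ogr(\hh)$ of $N$. The key facts I would invoke are: (a) for any Borel $f\colon\cbb\to\cbb$, the operator $f(N)=\int_\cbb f \D E$ has domain $\ddd(f(N)) = \{h\in\hh : \int_\cbb |f(z)|^2 \D \mu_h(z) < \infty\}$, where $\mu_h(\cdot)=\langle E(\cdot)h,h\rangle$; (b) $(f(N))^* = \bar f(N)$; (c) $f(N)\, g(N) \subseteq (fg)(N)$ with $\ddd(f(N)\, g(N)) = \ddd(g(N)) \cap \ddd((fg)(N))$.

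First I would prove by induction on $k$ that $N^k = \int_\cbb z^k \D E$. The cases $k=0,1$ are immediate from the spectral theorem. For the inductive step from $k$ to $k+1$, apply (c) with $f(z)=z$ and $g(z)=z^k$ to get $N\cdot N^k \subseteq \int_\cbb z^{k+1} \D E$ with
\begin{equation*}
\ddd(N\cdot N^k) = \ddd(N^k) \cap \ddd\Big(\textstyle\int_\cbb z^{k+1} \D E\Big).
\end{equation*}
The mild technical point is showing $\ddd(\int_\cbb z^{k+1}\D E) \subseteq \ddd(\int_\cbb z^k \D E)$, so that the intersection equals $\ddd(\int_\cbb z^{k+1}\D E)$; this follows from (a) because $\mu_h$ is finite (total mass $\|h\|^2$) and $|z|^{2k} \Le 1 + |z|^{2(k+1)}$ on $\cbb$, so $\int_\cbb |z|^{2(k+1)} \D\mu_h<\infty$ forces $\int_\cbb |z|^{2k}\D\mu_h<\infty$. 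This yields $N^{k+1} = \int_\cbb z^{k+1} \D E$.

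The same induction applied to the normal operator $N^*$, whose spectral measure is $\varDelta \mapsto E(\varDelta^*)$, gives $N^{*k} = \int_\cbb \bar z^{\,k} \D E$. Combining with (b),
\begin{equation*}
(N^k)^* = \Big(\textstyle\int_\cbb z^k \D E\Big)^{\!*} = \textstyle\int_\cbb \overline{z^k}\,\D E = \textstyle\int_\cbb \bar z^{\,k} \D E = N^{*k}.
\end{equation*}
Finally, by (a), both $\ddd(N^k)$ and $\ddd(N^{*k})$ coincide with $\{h\in\hh : \int_\cbb |z|^{2k}\D \mu_h(z) < \infty\}$, since $|z^k|=|\bar z^{\,k}|$, establishing the domain equality. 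The only nontrivial step is the inductive identification of $N^k$ with the spectral integral $\int z^k \D E$, and the sole obstacle there is the domain inclusion noted above, which is disposed of by the finiteness of $\mu_h$.
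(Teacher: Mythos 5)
Your proof is correct and takes essentially the same route as the paper: both arguments identify $N^k$ and $N^{*k}$ with the spectral integrals $\int_\cbb z^k\,\D E$ and $\int_\cbb \bar z^{\,k}\,\D E$, using $(f(N))^*=\bar f(N)$ and the transport to the spectral measure $\varDelta\mapsto E(\varDelta^*)$ of $N^*$. The only cosmetic differences are that you establish $N^k=\int_\cbb z^k\,\D E$ by induction via the multiplicativity rule (the paper cites Schm\"udgen's Theorem~5.9 for this) and you read the domain equality off the $L^2$-description of the domains, whereas the paper deduces it from the normality of $N^k$.
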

   \begin{proof}
Let $E$ be the spectral measure of $N$. It
follows from \cite[Theorem~5.9]{Sch12} and the
measure transport theorem (see
\cite[Theorem~5.4.10]{Bir-Sol87}) that
   \begin{align}  \label{ngwuz}
N^* = \int_{\cbb} \bar z E(\D z) = \int_{\cbb} z
\widetilde E(\D z)
   \end{align}
and
   \begin{align*}
(N^k)^* = \Big(\int_{\cbb} z^k E(\D z)\Big)^* =
\int_{\cbb} \bar z^k E(\D z) = \int_{\cbb} z^k
\widetilde E (\D z) \overset{\eqref{ngwuz}} =
N^{*k},
   \end{align*}
where $\widetilde E \colon \borel(\cbb) \to
\ogr(\hh)$ is the spectral measure given by
$\widetilde E(\varDelta) = E(\varDelta^*)$ for~
$\varDelta \in \borel(\cbb)$. Since $N^k$ is
normal, we conclude that
$\ddd(N^k)=\ddd((N^k)^*) = \ddd(N^{*k})$.
   \end{proof}
The next lemma is due to Szafraniec (see
\cite[Fact~D]{Sza00}). For the reader's
convenience we provide its proof.
   \begin{lemma} \label{pomoc-bis}
Let $T$ be a subnormal operator in $\hh$ with
normal extension $N$ acting in $\kk$ and let
$k\in \natu$. Then
   \allowdisplaybreaks
   \begin{gather*}
P\ddd(N^{*k}) \subseteq \ddd(T^{*k}),
   \\
P N^{*k} h = T^{*k} Ph, \quad h \in
\ddd(N^{*k}),
   \end{gather*}
where $P$ is the orthogonal projection of $\kk$
onto $\hh$. Moreover, if $T^k$ is densely
defined, then
   \begin{equation} \label{akagw}
\ddd(T^{*k})\subseteq \ddd((T^{k})^*).
\end{equation}
   \end{lemma}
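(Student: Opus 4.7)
The plan is to prove the first two assertions simultaneously by induction on $k$, and then obtain \eqref{akagw} by a short independent argument.

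For the base case $k=1$, fix $h \in \ddd(N^*)$ and $f \in \ddd(T)$. Subnormality gives $Tf = Nf$, while $f \in \hh$ yields $Pf = f$; combined with $Nf = Tf \in \hh$ (so $PNf = Nf$) and self-adjointness of $P$, a short unfolding of inner products in $\kk$ produces
\begin{align*}
\langle Tf, Ph\rangle = \langle Nf, h\rangle = \langle f, N^*h\rangle = \langle f, PN^*h\rangle,
\end{align*}
the extreme sides being inner products in $\hh$. Since $T$ is densely defined, this identifies $Ph \in \ddd(T^*)$ with $T^*Ph = PN^*h$. For the inductive step, any $h \in \ddd(N^{*(k+1)})$ satisfies $h \in \ddd(N^*)$ and $N^*h \in \ddd(N^{*k})$. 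The base case gives $Ph \in \ddd(T^*)$ and $T^*Ph = PN^*h$; the inductive hypothesis applied to $N^*h$ gives $PN^*h \in \ddd(T^{*k})$ with $T^{*k}(PN^*h) = PN^{*(k+1)}h$. Chaining these, $T^*Ph$ lies in $\ddd(T^{*k})$, so $Ph \in \ddd(T^{*(k+1)})$ and $T^{*(k+1)}Ph = PN^{*(k+1)}h$.

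For the ``moreover'' part, I would first verify by a separate induction on $k$ that
\begin{equation*}
\langle T^k f, g\rangle = \langle f, T^{*k}g\rangle, \quad f \in \ddd(T^k),\; g \in \ddd(T^{*k}),
\end{equation*}
by sliding $T$ from the left to the right one factor at a time; each intermediate step is legal because the memberships $T^{k-1-j}f \in \ddd(T)$ and $T^{*j}g \in \ddd(T^*)$ are forced by the given domain conditions. Once this identity is in hand, $|\langle T^k f, g\rangle| = |\langle f, T^{*k}g\rangle| \Le \|f\|\,\|T^{*k}g\|$ is bounded in $f$, and since $T^k$ is densely defined by assumption the adjoint $(T^k)^*$ is well-defined and $g \in \ddd((T^k)^*)$.

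The main bookkeeping obstacle is keeping the operators $T^{*k} = (T^*)^k$ and $(T^k)^*$ sharply distinct throughout; the inclusion $\ddd(T^{*k}) \subseteq \ddd((T^k)^*)$ is precisely the content of \eqref{akagw} and must be deferred until after the primary induction, which works entirely with the narrower domain $\ddd(T^{*k})$. Beyond this, everything reduces to routine juggling of the isometric embedding $\hh \hookrightarrow \kk$ with the self-adjointness and idempotence of $P$.
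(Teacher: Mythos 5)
Your proof is correct and follows essentially the same route as the paper: an induction on $k$ based on the pairing $\langle Tf,Ph\rangle=\langle Nf,h\rangle=\langle f,PN^{*}h\rangle$ for $f\in\ddd(T)$, the only (cosmetic) difference being that your inductive step composes the base case with the induction hypothesis, while the paper repeats the pairing computation at level $k$. For the inclusion $\ddd(T^{*k})\subseteq\ddd((T^{k})^{*})$ the paper simply records it as well known, and your factor-by-factor sliding argument giving $\langle T^{k}f,g\rangle=\langle f,T^{*k}g\rangle$ and hence $g\in\ddd((T^{k})^{*})$ is exactly the standard justification, so nothing is missing.
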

   \begin{proof} We proceed by induction on $k$. If
$k=1$ and $g\in \ddd(N^*)$, then
   \begin{align*}
\langle Th,Pg\rangle = \langle
Nh,g\rangle{=}\langle h,N^*g\rangle=\langle h,P
N^*g\rangle,\quad h\in \ddd(T),
   \end{align*}
which implies that $Pg\in \ddd(T^*)$ and $P
N^*g=T^*P g$.

Assume now that for an unspecified fixed $k\in
\natu$,
   \begin{align}\label{indste}
P N^{*k}\subseteq T^{*k} P.
   \end{align}
Let $g\in \ddd(N^{*(k+1)})$. Then $g\in
\ddd(N^{*k})$, so by \eqref{indste}, $Pg \in
\ddd(T^{*k})$ and thus
   \allowdisplaybreaks
   \begin{align*}
\langle Th,T^{*k}Pg\rangle&
\overset{\eqref{indste}}{=} \langle Th,P
N^{*k}g\rangle
   \\
& \hspace{1ex}= \langle Th, N^{*k}g\rangle
   \\
& \hspace{1ex}=\langle Nh,N^{*k}g\rangle
   \\
& \hspace{1ex}=\langle h,N^{*(k+1)}g\rangle
   \\
& \hspace{1ex}=\langle h,P N^{*(k+1)}g\rangle,
\quad h\in \ddd(T).
   \end{align*}
This implies that $T^{*k}Pg\in \ddd(T^*)$, or
equivalently that $Pg\in \ddd(T^{*(k+1)})$, and
$T^{*(k+1)}Pg=P N^{*(k+1)}g$. Thus $P
N^{*(k+1)}\subseteq T^{*(k+1)} P$. The inclusion
\eqref{akagw} is well known.
   \end{proof}
   \begin{proof}[Proof of Theorem~\ref{twsub}]
Let $N$ be a normal extension of $T$ acting in a
complex Hilbert space $\kk$, $G\colon
\borel(\cbb) \to \ogr(\kk)$ be the spectral
measure of $N$ and $P\in \ogr(\kk)$ be the
orthogonal projection of $\kk$ onto $\hh$.
Define the semispectral measure $\varTheta\colon
\borel(\cbb) \to \ogr(\hh)$ by
   \begin{equation} \label{vrthta}
\varTheta(\varDelta) = PG(\varDelta)|_{\hh},
\quad \varDelta \in\borel(\cbb).
   \end{equation}
It follows from \cite[Theorem~5.9]{Sch12} and
the measure transport theorem (see
\cite[Theorem~1.6.12]{Ash00}) that
   \allowdisplaybreaks
   \begin{align} \notag
\|T^kh\|^2 =\|N^kh\|^2 & =\int_{\cbb} |z|^{2k}
\langle G(\D z)h,h\rangle
   \\  \notag
&= \int_{\cbb} |z|^{2k} \langle \varTheta(\D
z)h,h\rangle
   \\ \label{putra}
& = \int_{\real_+}x^k\langle F(\D x)h,h\rangle,
\quad h\in \ddd(T^k), \; k\in \zbb_+,
   \end{align}
where $F\colon \borel(\real_+) \to \ogr(\hh)$ is
the semispectral measure defined by
   \begin{equation} \label{fntn}
F(\varDelta) =
\varTheta(\phi^{-1}(\varDelta)),\quad \varDelta
\in \borel(\real_+),
   \end{equation}
with $\phi\colon \comp\to \real_+$ given by
$\phi(z)=|z|^2$ for $z\in \comp$. By
\cite[Proposition~5.3]{Sto02}, $T^k$ is closed
for every $k\in \zbb_+$. Since $T^n$ is
quasinormal, it follows from
Theorem~\ref{embry}(iv) that there exists a
spectral measure $E_n\colon \borel(\real_+) \to
\ogr(\hh)$ such that
   \begin{equation} \label{measure}
(T^n)^{*k}(T^n)^k=\int_{\real_+}x^kE_n(\D x),
\quad k\in \zbb_+.
   \end{equation}
For $k\in \natu$, define the homeomorphism
$\psi_k\colon \rbb_+ \to \rbb_+$ by $\psi_k(x) =
x^k$ for $x\in \rbb_+$. Set
$B=\int_{\real_+}\sqrt{x}E_n(\D x)$. Then $B$ is
positive and selfadjoint. According to the
measure transport theorem, we have
   \begin{align*}
B=\int_{\real_+}\psi_2^{-1}(x)E_n(\D
x)=\int_{\real_+} x \widetilde E_n(\D x),
   \end{align*}
where $\widetilde E_n\colon \borel(\rbb_+) \to
\ogr(\hh)$ is the spectral measure defined by
   \begin{align*}
\widetilde E_n (\varDelta) =
E_n(\psi_2(\varDelta)), \quad \varDelta \in
\borel(\rbb_+).
   \end{align*}
Hence, by the spectral theorem, $\widetilde E_n$
is the spectral measure of $B$. Moreover, by
\cite[Theorem~5.9]{Sch12} and the measure
transport theorem, we have
   \begin{align}\label{dwagwi}
B^{2k}=\int_{\real_+} x^{2k}\widetilde E_n (\D
x)=\int_{\real_+}x^kE_n(\D x),\quad k\in \zbb_+.
   \end{align}
Combined with \eqref{measure}, this yields
   \begin{equation}\label{b2k}
(T^{n})^{*k}(T^{n})^{k}= B^{2k},\quad k\in
\zbb_+.
   \end{equation}

Our goal now will be to show that $F$ is a
spectral measure. For, set
   \begin{align*}
\hh_j=\mathscr{R}(\widetilde
E_n([0,j]))=\mathscr{R}(E_n([0,j^2])),\quad j\in
\natu.
   \end{align*}
Let $\mathscr{E}:=\bigcup_{j=1}^\infty \hh_j$. Since
the sequence $\{E_n([0,k])\}_{k=1}^{\infty}$ converges
to $I$ in the strong operator topology, the set
$\mathscr{E}$ is dense in $\hh$. Using the fact that
$\ddd(T^{j+1})\subseteq\ddd(T^{j})$ for all $j\in
\zbb_+$, we deduce that
   \begin{equation} \label{edinf}
\mathscr{E}\subseteq \ddd^\infty(B)
\overset{\eqref{b2k}}{\subseteq} \ddd^\infty(T).
   \end{equation}
Thus $\overline{\ddd^\infty(T)}=\hh$. It follows from
\eqref{b2k} and \eqref{edinf} that
   \begin{align} \label{edenf}
\|T^{nk}h\|^2=\langle B^{2k}h, h \rangle =
\|B^kh\|^2, \quad h\in \mathscr{E}, \; k\in
\zbb_+.
   \end{align}
By the measure transport theorem, we get
   \allowdisplaybreaks
   \begin{align} \notag
\int_{\real_+} x^{k}\langle E_n(\D x) h,h\rangle
&\overset{\eqref{dwagwi}}{=} \langle
B^{2k}h,h\rangle
   \\ \notag
&
\hspace{-.2em}\overset{\eqref{edenf}}{=}\|T^{nk}h\|^2
   \\ \notag
&\overset{\eqref{putra}}{=}\int_{\real_+}
(x^n)^{k}\langle F(\D x) h,h\rangle
    \\ \label{piec}
&\hspace{1ex}=\int_{\real_+} x^{k}\langle
\widetilde F (\D x)h, h\rangle,\quad h\in
\mathscr{E}, \; k\in \zbb_+,
   \end{align}
where $\widetilde F \colon \borel(\rbb_+) \to
\ogr(\hh)$ is the semispectral measure defined
by
   \begin{align}  \label{trifi}
\widetilde F (\varDelta) =
F(\psi_n^{-1}(\varDelta)), \quad \varDelta \in
\borel(\rbb_+).
   \end{align}
However, for any $h\in \mathscr{E}$ there exists
$j\in \natu$ such that $h \in
\hh_j=\mathscr{R}(E_n([0,j^2]))$, so
   \begin{equation*}
\int_{\real_+} x^{k}\langle E_n(\D x) h,h\rangle
= \int_{[0,j^2]} x^{k}\langle E_n(\D x)
h,h\rangle, \quad k \in \zbb_+.
   \end{equation*}
This implies that the Stieltjes moment sequence
$\{\int_{\real_+} x^{k}\langle E_n(\D x)
h,h\rangle\}_{k=0}^\infty$ is determinate for
every $h\in \mathscr{E}$. Thus, by \eqref{piec},
we have
   \begin{equation*}
\langle E_n(\varDelta) h,h\rangle = \langle
\widetilde F (\varDelta) h,h\rangle,\quad
\varDelta \in \borel(\real_+), \; h\in
\mathscr{E}.
   \end{equation*}
Since $\overline{\mathscr{E}}=\hh$, we see that
   \begin{equation}\label{eqmeas}
E_n(\varDelta) = \widetilde F(\varDelta), \quad
\varDelta \in \borel(\real_+).
   \end{equation}
Noting that the map $\borel(\real_+)\ni
\varDelta \to \psi_n^{-1}(\varDelta) \in
\borel(\real_+)$ is surjective, we deduce from
\eqref{trifi} and \eqref{eqmeas} that $F$ is a
spectral measure.

We will now show that
   \begin{equation}\label{osiem}
\ddd(J_k) = \ddd(N^{2k})\cap \hh, \quad k\in
\zbb_+,
   \end{equation}
where
   \begin{equation}  \label{dufjoka}
J_k:=\int_{\real_+} x^{k} F(\D x ),\quad k \in
\zbb_+.
   \end{equation}
For, observe that in view of the measure
transport theorem we have
   \allowdisplaybreaks
   \begin{align} \notag
\int_{\real_+} x^{2k} \langle F(\D x)h, h
\rangle & \overset{\eqref{fntn}} =\int_{\cbb}
|z|^{4k}\langle \varTheta(\D z)h,h\rangle
   \\ \label{ramka}
& \overset{\eqref{vrthta}}=\int_{\cbb}
|z|^{4k}\langle G(\D z)h,h\rangle, \quad h\in
\hh.
   \end{align}
Since $G$ is the spectral measure of $N$,
\eqref{osiem} follows from \eqref{ramka} and the
identity $N^j = \int_{\cbb} z^j G (\D z)$ which
holds for any $j \in \zbb_+$ (see
\cite[Theorem~5.9]{Sch12}).

Next, we will prove that
   \begin{equation}\label{dziewiec}
\ddd(J_k)\subseteq \ddd(T^{*k}T^k),\quad k\in
\zbb_+.
   \end{equation}
First, we show that
   \begin{equation} \label{dziesiec}
\ddd(J_k)\subseteq \ddd(T^k),\quad k\in \zbb_+.
   \end{equation}
For, note that
   \allowdisplaybreaks
   \begin{align} \notag
\mathscr{E} = \bigcup_{j=1}^\infty
\mathscr{R}(E_n([0,j^2])) & =
\bigcup_{j=1}^\infty \mathscr{R}(E_n([0,j^n]))
   \\ \notag
& =\bigcup_{j=1}^\infty
\mathscr{R}(E_n(\psi_n([0,j])))
   \\ \label{jedenascie}
& \hspace{-1.5ex}\overset{\eqref{eqmeas}}=
\bigcup_{j=1}^\infty\mathscr{R} (F([0,j])).
   \end{align}
Fix $k \in \zbb_+$ and take $h\in\ddd(J_k)$. Set
$h_j=F([0,j])h$ for $j\in \natu$. Then, by
\eqref{jedenascie}, $\{h_j\}_{j=1}^\infty
\subseteq \mathscr{E} \cap \ddd(J_k)$. Since
$h-h_j=F((j,\infty))h$ for $j\in \natu$ and
$\int_{\rbb_+} x^{2k} \langle F(\D x)h, h\rangle
< \infty$, we deduce from Lebesgue's dominated
convergence theorem that
   \begin{equation*}
\|J_k(h-h_j)\|^2=\|J_kF((j,\infty))h\|^2=\int_{(j,\infty)}
x^{2k} \langle F(\D x )h,h\rangle\rightarrow 0
\quad \text{as $j\to \infty$.}
   \end{equation*}
Hence
   \begin{align} \label{jutku}
h_j \to h \quad \text{and} \quad
J_kh_j\rightarrow J_kh \quad \text{as} \quad
j\rightarrow\infty.
   \end{align}
It follows from \eqref{edinf} and
\eqref{jedenascie} that $\{h_j\}_{j=1}^\infty
\subseteq \mathscr{E} \cap \ddd(J_k) \subseteq
\ddd^\infty(T)$, so by \eqref{putra} and
\eqref{dufjoka} we have
   \allowdisplaybreaks
   \begin{align*}
\|T^k(h_j-h_l)\|^2 & = \int_{\real_+} x^{k}
\langle F(\D x)(h_j-h_l),h_j-h_l\rangle
   \\
& = \langle J_k(h_j-h_l),(h_j-h_l) \rangle,
\quad j,l \in \natu.
   \end{align*}
Combined with \eqref{jutku}, this implies that
the sequence $\{T^kh_j\}_{j=1}^{\infty}$ is
convergent in $\hh$. Since $T^k$ is closed (see
\cite[Proposition 5.3]{Sto02}) and
$h_j\rightarrow h$ as $j\rightarrow \infty$, we
see that $h\in \ddd(T^k)$ and $T^kh_j\rightarrow
T^kh$ as $j\rightarrow \infty$. Applying
\eqref{putra}, \eqref{dufjoka} and \eqref{jutku}
again, we obtain
   \begin{align*}
\|T^kh\|^2 = \lim_{j\to \infty} \|T^kh_j\|^2 =
\lim_{j\to \infty}\langle J_kh_j,h_j\rangle =
\langle J_kh,h\rangle,
   \end{align*}
which completes the proof of \eqref{dziesiec}
and shows that
   \begin{equation}\label{dwanascie}
\|T^kh\|^2=\langle J_k h,h \rangle,\quad h\in
\ddd (J_k), \; k\in \zbb_+.
   \end{equation}

We now turn to the proof of \eqref{dziewiec}.
Fix $k \in \zbb_+$ and take $h\in \ddd(J_k)$. By
\eqref{osiem} and \eqref{dziesiec}, $h\in
\ddd(T^k) \cap \ddd(N^{2k})$ and consequently by
Lemmas~\ref{nkngwk} and \ref{pomoc-bis},
   \begin{equation*}
T^kh=N^{k}h\in \ddd(N^{k})\cap \hh =
\ddd(N^{*k})\cap \hh \subseteq \ddd (T^{*k}),
   \end{equation*}
so $h\in\ddd (T^{*k}T^k) $, which proves
\eqref{dziewiec}.

It follows from \eqref{dziewiec} and
\eqref{dwanascie} that
   \begin{align*}
\langle T^{*k}T^k h, h \rangle = \|T^kh\|^2
=\langle J_k h,h\rangle, \quad h\in \ddd(J_k),
\; k\in \zbb_+.
   \end{align*}
Since $J_k$ is densely defined, we get
   \begin{equation}  \label{jejat}
J_k \subseteq T^{*k}T^k, \quad k\in \zbb_+.
   \end{equation}
By induction, we have
   \begin{align*}
\langle T^{*k}T^k f, g \rangle = \langle f,
T^{*k}T^k g \rangle, \quad f,g \in
\ddd(T^{*k}T^k), \; k\in \zbb_+,
   \end{align*}
so $T^{*k}T^k$ is symmetric. Since $F$ is a
spectral measure, we infer from \eqref{dufjoka}
that $J_k$ is selfadjoint. By \eqref{jejat} and
maximality of selfadjoint operators, we obtain
   \begin{equation*}
T^{*k}T^k = J_k = \int_{\real_+} x^{k} F(\D x),
\quad k\in \zbb_+.
   \end{equation*}
It follows from Theorem~\ref{embry} that $T$ is
quasinormal. This completes the proof.
   \end{proof}
   \section{\label{Sec.6}Non-quasinormal $n$th roots of bounded quasinormal operators}
In this section we will discuss the question of
the existence of non-quasinormal $n$th roots of
(bounded) quasinormal operators. We begin with
the case of $n$th roots of normal operators. It
is a well-known fact that every normal operator
$T\in \ogr(\hh)$ has an $n$th root for any
integer $n\Ge 2$. Indeed, if $E$ is the spectral
measure of $T$, then $\int_{\cbb} \sqrt[n]{z}
E(\D z)$ is the $n$th root of $T$, where
$\sqrt[n]{z}$ is a Borel measurable branch of
the $n$th root on the complex plane (see e.g.,\
\cite[Proposition~1.13]{con87}). To simplify
further considerations, we will focus on square
roots of normal operators (which are complex
enough by themselves). If $\dim \hh \Ge 2$, then
there always exists a normal operator $T\in
\ogr(\hh)$ which does have a non-normal square
root. Indeed, it is enough to consider a normal
operator of the form $T=A^2 \oplus B^2 \oplus
B^2$, where $A$ and $B$ are normal operators on
complex Hilbert spaces $\mm$ and $\kk$,
respectively, and $\hh=\mm \oplus \kk\oplus \kk$
(the space $\mm$ may be absent). For, fix any
nonzero operator $C\in \ogr(\kk)$ that commutes
with $B$. Then the operator $S\in \ogr(\hh)$
defined~by
   \begin{align}  \label{bluk}
A \oplus \left[
\begin{matrix} B  & C \\ 0 & -B
\end{matrix}
\right]
   \end{align}
is a non-normal square root of $T$. It turns out
that if $\hh$ is separable and $\varkappa:=\dim
\hh \Ge 2$, then there is a normal operator
$T\in \ogr(\hh)$ that has only normal square
roots. For example, consider a compact normal
operator $T\in \ogr(\hh)$ with eigenvalues of
multiplicity $1$ (see
\cite[Theorem~7.1]{Weid80}). That $T$ does not
have a non-normal square root can be deduced
from \cite[Theorem~1]{Ra-Ros}, which states that
any square root of a normal operator is of the
form \eqref{bluk}, where $A$ and $B$ are normal
operators and $C$ is a nonzero operator that
commutes with $B$ (one of the summands in
\eqref{bluk} may be absent).

We now turn to the case of $n$th roots of
quasinormal operators. It is worth pointing out
that a quasinormal $n$th root of a normal
operator is normal (see
\cite[Theorem~5]{sta62}). It is also well known
that there are isometries that do not have
square roots (see \cite[Problems~145 and
151]{hal82}; see also \cite[p.~894]{hal70}). In
other words, quasinormal operators (even
completely non-normal) may not have square
roots. Our goal here is to show that if a
non-normal quasinormal operator has a
quasinormal $n$th root, where $n$ is an integer
grater than $1$, then it has many
non-quasinormal $n$th roots (see
Theorem~\ref{nrits} below). Clearly, by
Theorem~\ref{twhyp}, such $n$th roots are never
of class A. The proof of Theorem~\ref{nrits}
will be preceded by an auxiliary lemma.

For a given bounded sequence
$\lambdab=\{\lambda_k\}_{k=0}^{\infty}$ of
positive real numbers, there exists a unique
operator $W_{\lambdab}\in \ogr(\ell^2)$ such
that
   \begin{align*}
W_{\lambdab} e_k=\lambda_k e_{k+1}, \quad k \in
\zbb_+,
   \end{align*}
where $\{e_k\}_{k=0}^{\infty}$ is the standard
orthonormal basis of $\ell^2$; $W_{\lambdab}$ is
called a {\em unilateral weighted shift} with
weights $\lambdab$. If $\lambda_k=1$ for all
$k\in \zbb_+$, we denote the corresponding
unilateral weighted shift by $U$ and call it the
{\em unilateral shift} of multiplicity $1$.

The following lemma can be proved by straightforward
computations. We leave the details to the reader.
   \begin{lemma} \label{porws}
Let $W_{\lambdab}$ be a unilateral weighted
shift with positive weights $\lambdab =
\{\lambda_k\}_{k=0}^{\infty}$ and let $n\in
\natu$. Then the following conditions are
equivalent{\em :}
   \begin{enumerate}
   \item[(i)] $W_{\lambdab}^n=U^n$,
   \item[(ii)] $\prod_{j=0}^{n-1} \lambda_{k+j}=1$
for every $k\in \zbb_+$,
   \item[(iii)] $\prod_{j=0}^{n-1} \lambda_j=1$
and $\lambda_{kn+r} = \lambda_r$ for all $k\in \natu$
and $r=0, \ldots, n-1$.
   \end{enumerate}
   \end{lemma}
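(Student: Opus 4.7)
The plan is to prove the three equivalences by direct computation on the orthonormal basis $\{e_k\}_{k=0}^{\infty}$, showing (i)$\Leftrightarrow$(ii) first and then (ii)$\Leftrightarrow$(iii).

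For the first equivalence, I would compute the action of $W_{\lambdab}^n$ on basis vectors by iterating the defining formula: an easy induction shows that
\begin{align*}
W_{\lambdab}^n e_k = \Big(\prod_{j=0}^{n-1} \lambda_{k+j}\Big) e_{k+n}, \quad k \in \zbb_+,
\end{align*}
while $U^n e_k = e_{k+n}$ for all $k\in \zbb_+$. Since $\{e_k\}_{k=0}^{\infty}$ is an orthonormal basis, the operator identity $W_{\lambdab}^n = U^n$ holds if and only if the corresponding coefficients coincide for every $k\in \zbb_+$, which is precisely condition (ii).

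For (ii)$\Rightarrow$(iii), setting $k=0$ in (ii) gives $\prod_{j=0}^{n-1}\lambda_j = 1$. Taking the ratio of the products for consecutive indices $k$ and $k+1$ (both equal to $1$) and cancelling the common factors $\lambda_{k+1}, \ldots, \lambda_{k+n-1}$, which are nonzero by positivity, yields $\lambda_{k+n}=\lambda_k$ for every $k\in \zbb_+$. A straightforward induction on $k$ then gives $\lambda_{kn+r}=\lambda_r$ for all $k\in \zbb_+$ and $r\in\{0,\ldots,n-1\}$.

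For the converse (iii)$\Rightarrow$(ii), write an arbitrary index $k\in\zbb_+$ as $k=qn+r$ with $0\le r\le n-1$. Using $\lambda_{qn+i}=\lambda_i$ for $i\in\{0,\ldots,n-1\}$ (combined with one more application to handle indices that roll over past $n-1$), the product $\prod_{j=0}^{n-1}\lambda_{k+j}$ becomes, after a cyclic rearrangement, equal to $\prod_{j=0}^{n-1}\lambda_j = 1$. The only bookkeeping subtlety—and the closest thing to an obstacle in an otherwise routine verification—is this cyclic reindexing, which is clarified by splitting the product at the position where the index first exceeds a multiple of $n$ and invoking commutativity of multiplication.
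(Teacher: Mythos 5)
Your proof is correct and is exactly the ``straightforward computation'' that the paper leaves to the reader: comparing $W_{\lambdab}^n e_k=\bigl(\prod_{j=0}^{n-1}\lambda_{k+j}\bigr)e_{k+n}$ with $U^n e_k=e_{k+n}$ gives (i)$\Leftrightarrow$(ii), and the ratio/periodicity argument together with the cyclic reindexing handles (ii)$\Leftrightarrow$(iii). No gaps; nothing further is needed.
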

   \begin{corollary} \label{piurwa}
Let $n$ be an integer greater than $1$. Then there
exists a non-quasinormal unilateral weighted shift
$W_{\lambdab} \in \ogr(\ell^2)$ with positive weights
$\lambdab=\{\lambda_k\}_{k=0}^{\infty}$ such that
$W_{\lambdab}^n=U^n$.
   \end{corollary}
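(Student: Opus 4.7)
The plan is to leverage Lemma~\ref{porws}, which reduces the equality $W_{\lambdab}^n=U^n$ to the two algebraic conditions $\prod_{j=0}^{n-1}\lambda_j=1$ and the $n$-periodicity $\lambda_{kn+r}=\lambda_r$ for $k\in\natu$ and $r\in\{0,\ldots,n-1\}$. Thus all that will remain is to exhibit one such positive periodic sequence for which $W_{\lambdab}$ fails to be quasinormal.

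First I would record the standard fact that a unilateral weighted shift $W_{\lambdab}$ with positive weights is quasinormal if and only if its weight sequence is constant. This is an immediate one-line computation: $W_{\lambdab}(W_{\lambdab}^*W_{\lambdab})e_k=\lambda_k^3 e_{k+1}$ while $(W_{\lambdab}^*W_{\lambdab})W_{\lambdab}e_k=\lambda_k\lambda_{k+1}^2 e_{k+1}$, so quasinormality forces $\lambda_k=\lambda_{k+1}$ for every $k\in\zbb_+$.

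With these two ingredients in hand, I would simply write down an explicit non-constant periodic choice. For example, set $\lambda_0=2$, $\lambda_1=\tfrac12$ and $\lambda_r=1$ for $r=2,\ldots,n-1$ (the last clause being vacuous when $n=2$), and extend by $n$-periodicity, i.e.\ $\lambda_{kn+r}=\lambda_r$ for all $k\in\natu$ and $r\in\{0,\ldots,n-1\}$. The sequence is positive and bounded, so $W_{\lambdab}\in\ogr(\ell^2)$; the normalization $\prod_{j=0}^{n-1}\lambda_j = 2\cdot\tfrac12\cdot 1\cdots 1=1$ holds, so Lemma~\ref{porws} yields $W_{\lambdab}^n=U^n$; and the weights are manifestly not all equal, so by the previous paragraph $W_{\lambdab}$ is not quasinormal.

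There is essentially no obstacle here. The substantive content is already packaged in Lemma~\ref{porws}, and the only additional ingredient is the elementary characterization of quasinormality for weighted shifts; everything else is a matter of writing down any non-constant $n$-tuple of positive numbers whose product is $1$.
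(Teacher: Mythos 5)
Your proposal is correct and follows the paper's proof essentially verbatim: invoke Lemma~\ref{porws} with a non-constant positive $n$-periodic weight sequence whose period-block product is $1$, and conclude non-quasinormality from the fact that quasinormal weighted shifts with positive weights have constant weights (a fact the paper cites without proof and you verify by the one-line computation). The only cosmetic difference is that you pick an explicit sequence where the paper takes an arbitrary one.
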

   \begin{proof}
Fix any sequence $\{\lambda_k\}_{k=0}^{n-1}$ of
positive real numbers that is not constant and
such that $\prod_{j=0}^{n-1} \lambda_j=1$.
Extend it periodically to a sequence
$\lambdab=\{\lambda_k\}_{k=0}^{\infty}$ by
setting $\lambda_{kn+r}=\lambda_r$ for $k\in
\natu$ and $r=0, \ldots, n-1$. Clearly, the
sequence $\lambdab$ is bounded. It follows from
Lemma~\ref{porws} that $W_{\lambdab}^n=U^n$.
However, $W_{\lambdab}$ is not quasinormal
because the only quasinormal unilateral weighted
shifts with positive weights are operators of
the form $t U$, where $t$ is a positive real
number.
   \end{proof}
We now show that if a non-normal quasinormal operator
$T$ has a quasinormal $n$th root with $n\Ge 2$, then
it has a non-quasinormal $n$th root. In fact, the
proof of Theorem~\ref{nrits} below gives more
information about non-quasinormal $n$th roots of
such~$T$.
   \begin{theorem} \label{nrits}
Let $T\in \ogr(\hh)$ be a non-normal quasinormal
operator and $n$ be an integer greater than $1$.
If $T$ has a quasinormal $n$th root, then it has
a non-quasinormal $n$th root.
   \end{theorem}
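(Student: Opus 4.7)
The plan is to combine A.~Brown's structure theorem for quasinormal operators with Corollary~\ref{piurwa}. Let $S \in \ogr(\hh)$ be a quasinormal $n$th root of $T$, so $S^n = T$. Since $T$ is non-normal, the operator $S$ cannot itself be normal (otherwise $T = S^n$ would be), hence the pure quasinormal part of $S$ is nontrivial. By Brown's classification \cite{brow53}, $S$ is unitarily equivalent to an operator of the form $N \oplus (A \otimes U)$ acting on $\hh_0 \oplus (\kk \otimes \ell^2)$, where $N$ is a normal operator, $U \in \ogr(\ell^2)$ is the unilateral shift of multiplicity one, and $A \in \ogr_+(\kk)$ is a positive \emph{injective} operator on a nonzero complex Hilbert space $\kk$.

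I would then invoke Corollary~\ref{piurwa} to select a non-quasinormal unilateral weighted shift $W_{\lambdab} \in \ogr(\ell^2)$ with positive weights satisfying $W_{\lambdab}^n = U^n$, and put
\[
S' := N \oplus (A \otimes W_{\lambdab}).
\]
Routine tensor-product and direct-sum algebra gives
\[
(S')^n = N^n \oplus \bigl(A^n \otimes W_{\lambdab}^n\bigr) = N^n \oplus \bigl(A^n \otimes U^n\bigr) = S^n = T,
\]
so $S'$ is an $n$th root of $T$ acting on the same Hilbert space as $S$.

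To see that $S'$ is not quasinormal, I would use that an orthogonal direct sum is quasinormal if and only if each summand is, so it suffices to show $A \otimes W_{\lambdab}$ is not quasinormal. Writing $X = A \otimes W_{\lambdab}$ for brevity, a direct calculation yields
\[
XX^*X - X^*X^2 = A^3 \otimes \bigl(W_{\lambdab} W_{\lambdab}^* W_{\lambdab} - W_{\lambdab}^* W_{\lambdab}^2\bigr).
\]
Because $A$ is injective, this vanishes if and only if $W_{\lambdab}$ itself is quasinormal, and by construction $W_{\lambdab}$ is not. The main obstacle in this strategy is the appeal to Brown's classification, which must supply both the positivity of $A$ (needed so that $A \otimes W_{\lambdab}$ has the correct $n$th power $A^n \otimes U^n$) and, crucially, its injectivity on the nonzero pure-quasinormal summand (which is what transfers failure of quasinormality from $W_{\lambdab}$ up to the tensor product). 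All remaining steps reduce to routine direct-sum/tensor bookkeeping or the immediate application of Corollary~\ref{piurwa}.
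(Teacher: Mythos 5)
Your proposal is correct and follows essentially the same route as the paper: Brown's structure theorem for the quasinormal root, Corollary~\ref{piurwa} to replace the unilateral shift $U$ by a non-quasinormal weighted shift $W_{\lambdab}$ with $W_{\lambdab}^n=U^n$, and the observation that the pure part must be nonzero since $T$ is non-normal. The only (harmless) deviation is at the last step, where the paper quotes \cite[Theorem~2.4]{Sto96} to transfer non-quasinormality from $W_{\lambdab}$ to the tensor product, while you verify it by the direct identity $X(X^*X)-(X^*X)X=A^3\otimes\bigl(W_{\lambdab}W_{\lambdab}^*W_{\lambdab}-W_{\lambdab}^*W_{\lambdab}^2\bigr)$, for which $A\neq 0$ (a fortiori injectivity) already suffices.
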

   \begin{proof}
Let $Q\in \ogr(\hh)$ be a quasinormal $n$th root
of $T$. According to \cite[Theorem~1]{brow53}
(see also \cite[Sec.\ II.\S3]{con91}), the
operator $Q$ takes the following form (up to
unitary equivalence)
   \begin{align} \label{swiadu}
Q = N\oplus(U\otimes S),
   \end{align}
where $N$ is a normal operator, $S$ is a positive
operator such that $\nul(S)=\{0\}$ and $U$ is the
unilateral shift of multiplicity $1$. We will consider
two cases.

{\sc Case 1.} $U\otimes S$ acts on a nonzero
complex Hilbert space.

It follows from Corollary~\ref{piurwa} that there
exists a non-quasinormal unilateral weighted shift
$W_{\lambdab} \in \ogr(\ell^2)$ with positive weights
$\lambdab=\{\lambda_n\}_{n=0}^{\infty}$ such that
$W_{\lambdab}^n=U^n$. Then we have
   \begin{align*}
\big(N\oplus(W_{\lambdab}\otimes S)\big)^n =
N^n\oplus(U^n\otimes S^n) \overset{\eqref{swiadu}}=
Q^n=T.
   \end{align*}
Therefore $R:=N\oplus(W_{\lambdab}\otimes S)$ is an
$n$th root of $T$. We show that $R$ is not
quasinormal. Indeed, otherwise $W_{\lambdab}\otimes S$
is quasinormal. Since $W_{\lambdab}$ and $S$ are
nonzero operators, it follows from
\cite[Theorem~2.4]{Sto96} that $W_{\lambdab}$ is
quasinormal, which leads to a contradiction.

{\sc Case 2.} $Q=N$.

Then $T=N^n$, which implies that $T$ is normal, a
contradiction.
   \end{proof}
In view of the discussion preceding
Lemma~\ref{porws}, the natural question arises
as to whether the converse of
Theorem~\ref{nrits} holds.
   \begin{prob}
Let $T\in \ogr(\hh)$ be a non-normal quasinormal
operator which has a non-quasinormal $n$th root,
where $n$ is an integer greater than $1$. Does
it follow that $T$ has a quasinormal $n$th root?
   \end{prob}
   \subsection*{Acknowledgements}
The authors would like to thank Professor Z. J.
Jab{\l}o\'{n}\-ski for posing the question of
the existence of non-quasinormal $n$th roots of
quasinormal operators. This problem was solved
in Section~\ref{Sec.6} in a relatively general
context (see Theorem~\ref{nrits}).
   \bibliographystyle{amsalpha}
   
   \end{document}